\crefname{hypothesis}{Hypothesis}{Hypotheses}
\title{Efficient IMEX Runge-Kutta methods for nonhydrostatic dynamics\thanks{Support  for  this  work  was  provided  by  the  Department  of  Energy,  Office  of  Science  Scientific  Discovery  through Advanced Computing (SciDAC) project “A Non-hydrostatic Variable Resolution Atmospheric Model in ACME.”  \newline \text{  }\text{  }\text{  }\text{  } Sandia National Laboratories is a multi-mission laboratory managed and operated by National Technology and Engineering Solutions of Sandia, LLC., a wholly owned subsidiary of Honeywell International, Inc., for the U.S. Department of Energy’s National Nuclear Security Administration under contract DE-NA0003525.  This paper describes objective technical results and analysis. Any subjective views or opinions that might be expressed in the paper do not necessarily represent the views of the U.S. Department of Energy or the United States Government. \newline \text{  }\text{  }\text{  }\text{  }  This work was performed under the auspices of the U.S. Department of Energy by Lawrence Livermore National Laboratory under Contract DE-AC52-07NA27344.  LLNL-JRNL-777661.  \newline \text{  }\text{  }\text{  } This document was prepared as an account of work sponsored by an agency of the United States government. Neither the United States government nor Lawrence Livermore National Security, LLC, nor any of their employees makes any warranty, expressed or implied, or assumes any legal liability or responsibility for the accuracy, completeness, or usefulness of any information, apparatus, product, or process disclosed, or represents that its use would not infringe privately owned rights. Reference herein to any specific commercial product, process, or service by trade name, trademark, manufacturer, or otherwise does not necessarily constitute or imply its endorsement, recommendation, or favoring by the United States government or Lawrence Livermore National Security, LLC. The views and opinions of authors expressed herein do not necessarily state or reflect those of the United States government or Lawrence Livermore National Security, LLC, and shall not be used for advertising or product endorsement purposes.}}
\author{Andrew Steyer\thanks{Computational Science, Sandia National Laboratories, Albuquerque,
New Mexico, USA, (asteyer@sandia.gov).} \and  Christopher J. Vogl\thanks{Center for Applied Scientific Computing, Lawrence Livermore National Laboratory, Livermore, California, USA (vogl2@llnl.gov).} \and Mark Taylor\thanks{Computational Science, Sandia National Laboratories, Albuquerque,
New Mexico, USA, (mataylo@sandia.gov).} \and Oksana Guba\thanks{Computational Science, Sandia National Laboratories, Albuquerque,
New Mexico, USA,  (onguba@sandia.gov).}}
\crefname{hypothesis}{Hypothesis}{Hypotheses}
\Crefname{ALC@unique}{Line}{Lines}
\colorlet{texcscolor}{blue!50!black}
\colorlet{texemcolor}{red!70!black}
\colorlet{texpreamble}{red!70!black}
\colorlet{codebackground}{black!25!white!25}
\lstdefinestyle{siamlatex}{%
  style=tcblatex,
  texcsstyle=*\color{texcscolor},
  texcsstyle=[2]\color{texemcolor},
  keywordstyle=[2]\color{texemcolor},
  moretexcs={cref,Cref,maketitle,mathcal,text,headers,email,url},
}
\DeclareTotalTCBox{\code}{ v O{} }
{ 
  fontupper=\ttfamily\color{black},
  nobeforeafter,
  tcbox raise base,
  colback=codebackground,colframe=white,
  top=0pt,bottom=0pt,left=0mm,right=0mm,
  leftrule=0pt,rightrule=0pt,toprule=0mm,bottomrule=0mm,
  boxsep=0.5mm,
  #2}{#1}
\patchcmd\newpage{\vfil}{}{}{}
\newcommand{\pd}[3]{\frac{\partial^{#3} #1}{\partial #2^{^{#3}}}}
\newcommand{\bo}[1]{\mathbf{#1}}
\begin{document}
\maketitle

\begin{abstract}  
 We analyze the stability and accuracy (up to third order) of a new family of implicit-explicit Runge-Kutta (IMEX RK) methods.  This analysis expedites development of methods with various balances in the number of explicit stages and implicit solves.  We emphasize deriving methods with large stability regions for horizontally explicit vertically implicit (HEVI) partitionings of nonhydrostatic atmosphere models.  The IMKG2 and IMKG3 families of IMEX RK methods are formulated within this framework.  The HOMME-NH model with a HEVI partitioning is used for testing the accuracy and stability of various IMKG2-3 methods.  The efficiency of several IMKG2-3 methods is demonstrated in HOMME-NH and compared to other IMEX RK methods in the literature.

\end{abstract}

\begin{keywords}
implicit-explicit method, IMEX method, semi-implicit, Runge-Kutta method, time-integration, HEVI, nonhydrostatic, global model, atmosphere model
\end{keywords}

\begin{AMS}
  65L04, 65L05, 65L06, 65L07, 65L20, 65M20, 86A10
\end{AMS}


\section{Introduction}

Method-of-lines discretizations of time-dependent partial differential equations (PDEs) frequently result in stiff initial value problems (IVPs).  The dynamics of the resulting stiff IVPs may include processes evolving on separate time-scales.  Such IVPs can often be partitioned into a stiff term representing fast processes and a non-stiff term representing slow processes.  Many alternatives to traditional implicit methods exist for discretizing such partitioned IVPs, including implicit-explicit (IMEX), exponential, and multirate methods.  These alternatives can circumvent step-size restrictions intrinsic to traditional explicit methods, often at a lower computational cost than traditional implicit methods.  In this paper, we analyze a family of IMEX Runge-Kutta (RK) methods for stability and accuracy (up to third order).  Our focus is on deriving methods that are efficient for nonhydrostatic atmosphere models with a horizontally explicit vertically implicit (HEVI) partitioning. As such, we develop a HEVI partitioning of the HOMME-NH nonhydrostatic atmosphere model and compare the performance of IMEX RK methods, both derived herein and from the literature, for its integration in time.


After covering some preliminaries in Section \ref{sec:preliminaries}, we present the family of IMEX RK methods we analyze in Equation \eqref{eq:IMEXrk_ls} in Section \ref{sec:methods}.  The analysis includes methods with an arbitrarily large number of internal stages.  The number of implicit solves can be varied by setting diagonal entries in the implicit method's Butcher tableau to zero.  This enables deriving methods with efficient balances of explicit and implicit stages.  In Theorem \ref{thm:orderthm}, we derive simplified criteria for methods of the form \eqref{eq:IMEXrk_ls} to have second or third order accuracy.  The remaining free coefficients are chosen to optimize the explicit stability region on the imaginary axis, ensure that the implicit method is I-, A-, or L-stable (sufficient conditions for which are given in Theorems \ref{thm:Lstable} and  \ref{thm:Istable}), and improve the H-stability region (defined in Section \ref{sec:preliminaries}).  The IMKG2 and IMKG3 methods (Definition \ref{def:imexkg}) are subsequently introduced.   Although we emphasize deriving methods for atmosphere models with a HEVI partitioning, the analysis is general and can be used for other applications.  Double Butcher tableaux and various properties of the IMKG2-3 methods we derive are given in the appendix.  

In Sections \ref{sec:nh}-\ref{sec:experiments}, we derive a HEVI IMEX partitioning for the HOMME-NH nonhydrostatic atmosphere model that is then used to evaluate the performance of various methods.  The governing equations of HOMME-NH (Equation \eqref{eq:nhtheta}) support vertically propagating acoustic waves (Section \ref{sec:vertwav}) requiring stable numerical treatment.  The stiff terms generating these waves are isolated to the equations for vertical momentum and geopotential.  This results (Section \ref{sec:imexsplitting}) in a HEVI IMEX partitioning where the implicitly treated terms require the solution of relatively simple nonlinear equations that are independent of horizontal derivatives.  The nonlinear solvers can then be implemented without horizontal parallel communication.  The performance of various IMKG2-3 methods integrating HOMME-NH with this HEVI partitioning is investigated in Section \ref{sec:experiments}.  The most efficient of these methods can run with relatively large maximum stable step-sizes for a variety of vertical-to-horizontal aspect ratios.  These efficient IMKG2-3 methods have a faster time-to-solution than other IMEX RK methods we test from the literature. 

Our focus on IMEX methods is motivated by their frequent use in models of geophysical fluid flow \cite{GRL2010,GKC2013,GC2016,BD2012,LWW2013,LWW2014,UllrichJablonowski2012,ARKODE2018,Satoh2002}.  Order conditions for various partitioned and IMEX methods were derived in \cite{Hairer1981}.  Explicit formulas for the order conditions of the IMEX RK methods we analyze are given in \cite{JV2000}.  Understanding stability properties of IMEX methods is important for deriving efficient methods and has been extensively studied (see e.g. \cite{BHL1982,JHV1997,ARS1997} and more recently \cite{BSZ2016,JI2017,RSSZ2017}).  We exploit the technique, dating back at least to the early 1970s \cite{Houwen1972}, of increasing the maximum stable step-size by increasing the number of explicit stages.  The analysis pioneered in \cite{BD2012,LWW2013,LWW2014} is then used to improve the stability properties of our methods for integration of nonhydrostatic atmosphere models with a HEVI  partitioning \cite{Satoh2002}.  


The HOMME-NH nonhydrostatic atmosphere model used to evaluate the IMKG2-3 methods is based on the spectral element hydrostatic HOMME dynamic core \cite{camse2012,cam42013,FournierTaylor2010}.  HOMME-NH is expected to run at a variety of high (3km) and low (25-100km) horizontal resolutions.  It therefore requires time-integration methods that are efficient across a wide range of vertical-to-horizontal aspect ratios.  Semi-implicit and IMEX time-integration strategies have been employed in nonhydrostatic atmosphere models for many years (see e.g. \cite{tappwhite1976,Satoh2002}).  As mentioned above, the strategy we employ is the HEVI partitioning \cite{Satoh2002,BD2012,LWW2013,LWW2014,ARKODE2018,accerlatenuma} that treats stiff vertically propagating acoustic waves implicitly and everything else explicitly.

\section{Implicit-explicit Runge-Kutta methods}\label{sec:preliminaries}

\subsection{Formulation}

Consider an additively partitioned ODE
\begin{equation}\label{eq:ode}
\dot{x} = f(x,t) \equiv  n(x,t)+s(x,t), \quad f,n,s:\mathbb{R}^{d}\times \mathbb{R} \rightarrow \mathbb{R}^{d},
\end{equation}
where $d \in \mathbb{N}$ and $\dot{x}$ is the derivative of $x$ with respect to $t$. Given $r \in \mathbb{N}$ and real-valued arrays $b,\hat{b},c,\hat{c} \in \mathbb{R}^r$ and $A,\hat{A} \in \mathbb{R}^{r\times r}$ where $\hat{A}$ is lower triangular and $A$ is strictly lower triangular, we consider $r$-stage IMEX RK methods for approximating IVPs of \eqref{eq:ode} with initial condition $x(t_0) = x_0$ defined by  
\begin{equation}\label{eq:dirkIMEX}
\left\{\begin{array}{lcr}
x_{m+1} = x_m + \Delta t \sum_{k=1}^{r}( b_k n_{m,k} + \hat{b}_k s_{m,k})\\
g_{m,j} = E_{m,j} + \Delta t \hat{A}_{j,j} s_{m,k}, \quad j=1,\ldots,r \quad m \in \{0\} \cup \mathbb{N},
\end{array}\right. 
\end{equation}
where $\Delta t > 0$ is the step-size, $n_{m,k}:= n(g_{m,k},t_m + c_k \Delta t)$, $s_{m,k} := s(g_{m,k},t_m+ \hat c_k \Delta t)$, $t_m := t_0 + m\Delta t$, and
$$E_{m,j} :=\left\{  \begin{array}{cc} x_m & j = 1 
\\ x _{m} + \Delta t \sum_{k=1}^{j-1}( A_{j,k}n_{m,k} + \hat{A}_{j,k} s_{m,k} ) & j=2,\hdots,r.  \end{array} \right. $$
We represent \eqref{eq:dirkIMEX} with a double Butcher tableau:
\begin{equation}\label{eq:doubletable}
\def\arraystretch{1.2}
\begin{array}{c|c} c & A \\ \hline & b^T \end{array} \quad \begin{array}{c|c} \hat{c} & \hat{A} \\ \hline & \hat{b}^T \end{array}.
\end{equation}
The explicit RK method $\def\arraystretch{1.2}\begin{array}{c|c} c & A \\ \hline & b^T \end{array}$ is called the explicit method of \eqref{eq:dirkIMEX} and the implicit RK method $\def\arraystretch{1.2}\begin{array}{c|c} \hat{c} & \hat{A} \\ \hline & \hat{b}^T \end{array}$ is called the implicit method of \eqref{eq:dirkIMEX}.   If $\hat{A}$ has $\nu \leq r$ nonzero diagonal entries, then we say that \eqref{eq:dirkIMEX} has $\nu$ implicit stages.  If $b_j = A_{r,j}$ and $\hat{b}_j = \hat{A}_{r,j}$ for $j=1,\ldots,r$, then we say that \eqref{eq:dirkIMEX} is FSAL (first same as last).  If every nonzero diagonal entry of $\hat{A}$ is equal, then we say that \eqref{eq:dirkIMEX} is SD (single diagonal entry).







\subsection{Stability of explicit RK methods on the imaginary axis}



The stability theory of RK methods for hyperbolic PDEs is a well-established subject \cite{JN1981,Houwen1996}.  Important to our work is the following theorem bounding the intersection of the stability region of an explicit RK method with the imaginary axis.
\begin{theorem}
Given real numbers $a < b$ let $i \cdot [a,b] := \{ z \in \mathbb{C}: z = i \xi, \xi \in [a,b]\}$.  For an $r$-stage explicit RK method with $r \geq 2$ and stability region $\mathcal{S}$, the maximal interval $[a,b]$ such that $i \cdot [a,b] \in \mathcal{S}$ is contained in $[-r+1,r-1]$.
\end{theorem}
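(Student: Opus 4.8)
The plan is to work with the stability function $R(z)$ of an $r$-stage explicit RK method, which is a polynomial in $z$ of degree at most $r$, and to exploit the fact that on the imaginary axis $|R(i\xi)|^2$ is a real polynomial in $\xi^2$. First I would recall that for any explicit RK method the stability function has the form $R(z) = 1 + z + \sum_{k=2}^{r} \gamma_k z^k$ for some real coefficients $\gamma_k$ (the coefficients of $z^0$ and $z^1$ are fixed to $1$ by consistency, and $\gamma_k$ depends on the method). The stability region is $\mathcal{S} = \{z \in \mathbb{C} : |R(z)| \le 1\}$, so the interval $i\cdot[a,b] \subset \mathcal{S}$ precisely when $|R(i\xi)| \le 1$ for all $\xi \in [a,b]$.

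The key step is to write $P(\xi) := |R(i\xi)|^2 - 1$, a polynomial in $\xi$ of degree at most $2r$, in fact a polynomial in $y := \xi^2$ of degree at most $r$. Consistency forces the expansion $|R(i\xi)|^2 = 1 + \xi^2 \cdot(\text{higher order})$ near $\xi = 0$ --- more precisely, since $R(i\xi) = 1 + i\xi - \gamma_2 \xi^2 + O(\xi^3)$, one computes $|R(i\xi)|^2 = 1 + \xi^2 + O(\xi^4)\cdot(\dots)$; the $\xi^2$ coefficient is actually $1 - 2\gamma_2 + \dots$, so I need to be a little careful, but the upshot is that $P(y)/y$ is a polynomial of degree at most $r-1$ in $y$. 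Then the maximal symmetric interval $[-\beta,\beta]$ with $i\cdot[-\beta,\beta]\subset\mathcal{S}$ is governed by the smallest positive root $y_* = \beta^2$ of $P$, and the claim $[a,b]\subseteq[-r+1,r-1]$ amounts to showing $\beta \le r-1$, i.e. $y_* \le (r-1)^2$. (One should first argue the maximal interval is symmetric about $0$, which follows because $R$ has real coefficients, so $|R(-i\xi)| = |\overline{R(i\xi)}| = |R(i\xi)|$, hence $\xi\in$ stable iff $-\xi\in$ stable, and the stable set on the imaginary axis is a symmetric neighborhood of $0$ by continuity and $P(0)=0$, $P'(0)\le 0$ needed — actually $P$ might be positive just past $0$ only if the method is not stable near the axis at all, in which case $[a,b]=\{0\}$ and the bound is trivial.)

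To get the bound $y_* \le (r-1)^2$, I would use a counting/positivity argument on the polynomial $Q(y) := -P(y) = 1 - |R(i\sqrt{y})|^2$, which is nonnegative on $[0, y_*]$, vanishes at $y=0$, and has degree at most $r$ in $y$. The cleanest route is via the classical result (van der Houwen, Kinnmark--Gray) that the imaginary stability boundary of an $r$-stage, order-$\ge 1$ explicit method is bounded by $r-1$, whose proof compares $R$ with the scaled Chebyshev polynomial: the extremal stability function with maximal imaginary-axis interval is essentially $T_{r}$ composed with a linear map, or more directly one shows that if $|R(i\xi)|\le 1$ on $[0,\beta]$ then the polynomial $R(z)$ restricted to the segment cannot oscillate enough unless $\beta \le r-1$, by a Markov-brothers-type inequality: $|R'(0)| = 1$ (from consistency), while a degree-$r$ polynomial bounded by $1$ in modulus on $i\cdot[-\beta,\beta]$ has derivative at an endpoint-adjacent interior point bounded by $r^2/\beta$ times the max, and a sharper bound $|R'(0)|\le (r-1)/\beta \cdot \sup$ type estimate forces $\beta \le r-1$.

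I expect the main obstacle to be pinning down exactly which extremal polynomial inequality to invoke and verifying it applies on a segment of the imaginary axis rather than a real interval: the transfer is $R(iz)$ has complex coefficients on a real interval, so I would instead look at the even and odd parts, writing $R(i\xi) = E(\xi^2) + i\xi O(\xi^2)$ with $E, O$ real polynomials of degrees $\le \lfloor r/2\rfloor$ and $\le \lfloor (r-1)/2\rfloor$, so that $|R(i\xi)|^2 = E(\xi^2)^2 + \xi^2 O(\xi^2)^2$; the condition $\le 1$ on $[0,\beta^2]$ together with $E(0)=1$, $O(0)=1$ (consistency) and the degree bounds is exactly the setup of the Kinnmark--Gray extremal problem, whose solution gives $\beta \le r-1$ with equality for a Chebyshev-like choice. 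Carrying that comparison through carefully — identifying the optimal $E,O$ as (shifted) Chebyshev polynomials and reading off the bound — is the technical heart; the rest (symmetry of the interval, reduction to the symmetric case, the trivial case where the interval degenerates to $\{0\}$) is routine.
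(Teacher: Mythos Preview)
The paper does not actually prove this theorem: immediately after the statement it writes ``For a proof refer to \cite[Theorem 5.1]{JN1981}, \cite[Theorem 2]{Vichnevetsky1983}, or \cite[Chapter 4]{Houwen1977}.'' So there is no in-paper argument to compare against; your proposal already goes well beyond what the paper itself does.

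As an outline of the classical proof in those references, your plan is largely on target: exploit the real-coefficient symmetry $|R(-i\xi)|=|R(i\xi)|$ to reduce to a symmetric interval, split $R(i\xi)=E(\xi^2)+i\xi\,O(\xi^2)$ with real $E,O$ satisfying $E(0)=O(0)=1$, and then solve the extremal problem $E(y)^2+y\,O(y)^2\le 1$ on $[0,\beta^2]$ under the stated degree constraints, whose optimum is attained by Chebyshev-type polynomials and yields $\beta\le r-1$. That final comparison is indeed the technical heart, and carrying it out carefully (as in Kinnmark--Gray or van der Houwen) completes the proof.

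One concrete gap to flag: the Markov/Bernstein route you sketch first does \emph{not} by itself give the sharp constant. Bernstein's inequality for a degree-$r$ polynomial $p(\xi)=R(i\xi)$ with $|p|\le 1$ on $[-\beta,\beta]$ gives only $|p'(0)|\le r/\beta$, and since consistency forces $|p'(0)|=|iR'(0)|=1$, this yields only $\beta\le r$, not $\beta\le r-1$. The missing unit comes from the additional constraint $|R(0)|=1=\sup_{[-\beta,\beta]}|R(i\xi)|$, i.e.\ the modulus attains its maximum at the interior point $\xi=0$; this forces $1-|R(i\xi)|^2$ to vanish to order~$2$ at $\xi=0$, effectively costing a degree, which is exactly what the $E,O$/Chebyshev argument captures but a bare derivative bound does not. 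So drop the Bernstein detour and go straight to the even/odd decomposition and the extremal comparison.
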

For a proof refer to \cite[Theorem 5.1]{JN1981},  \cite[Theorem 2]{Vichnevetsky1983}, or \cite[Chapter 4]{Houwen1977}.  The stability polynomials achieving the optimal stability limit ($i \cdot [-r+1,r-1] \subset \mathcal{S}$), referred to as the KGO  (Kinnmark and Grey optimal) polynomials, are given in \cite[Table 1]{KG1984a}.  We employ the third and fourth order accurate KGNO  (Kinnmark and Grey near optimal) polynomials \cite[Table 1]{KG1984b} when KGO polynomials do not attain the desired order of accuracy.  The stability region of explicit RK methods with KGNO stability polynomials contains $i \cdot [-r_0,r_0]$ where $r_0 = \sqrt{(r-1)^2 -1}$.

\subsection{Stability of IMEX methods with HEVI partitionings}\label{sec:imexstability}

 The following test equation (posed here in dimensionless form) has been proposed for characterizing the stability of IMEX methods for atmospheric models with a HEVI splitting \cite{BD2012,LWW2013,LWW2014}:
\begin{equation}\label{eq:hevitesteq}
\dot{u} = -i k_x  \mathcal{N} u -i k_z  \mathcal{S} u , \quad \mathcal{N} = \left[\begin{array}{ccc}0 & 0 & 1 \\ 0 & 0 & 0 \\ 1 & 0 & 0 \end{array}\right], \quad  \mathcal{S}=\left[\begin{array}{ccc}0 & 0 & 0 \\ 0 & 0 & 1 \\ 0 & 1 & 0 \end{array}\right].
\end{equation}
Here $k_x, k_z \in \mathbb{R}$ represent wave numbers of horizontally and vertically propagating waves and are referred to as horizontal and vertical wave numbers.  Let $K_x$ and $K_z$ denote the set of all horizontal and vertical wave numbers of a given problem. 

Approximating an IVP of \eqref{eq:hevitesteq} with the method \eqref{eq:dirkIMEX}, initial value $u(t_0) = u_0$, and step-size $\Delta t > 0$ results in the following difference equation
$$u_{m+1} = R_H(\Delta t k_x,\Delta t k_z )u_m, \quad m \in \{0\}\cup \mathbb{N},$$
where the stability matrix $R_H$ is defined by
\begin{equation}\label{eq:Hstabfun}
R_H(x,z) = I_3 -i(b^T \otimes xN + \hat{b}^T \otimes zS)(I_{3r} +  A \otimes ixN + \hat{A} \otimes iz S)^{-1}(\mathbf{1}_r \otimes I_3),
\end{equation}
where $I_w$ is the $w \times w$ identity matrix ($w \in \mathbb{N}$), $\mathbf{1}_w := (1,\ldots,1)^T\in \mathbb{R}^w$, and $\otimes$ represents the Kronecker product.  The HEVI or H-stability region is defined as
$$\mathcal{S}_{H} := \{x,z \geq 0: \text{ each eigenvalue of } R_H(x,z) \text{ is at most 1 in modulus}\}.$$ 
Define the set $\mathcal{F}_{\Delta t} := \{(\Delta t k_x,\Delta t k_z): k_x \in K_x, k_z \in K_z\}$.  Stable time-steps $\Delta t$ are those for which $\mathcal{F}_{\Delta t}\subseteq  \mathcal{S}_H$.  We will use H-stability regions to improve the stability of methods we derive in Section \ref{sec:imkgstability}.  By considering $k_x = 0$ and $k_z = 0$ it follows that ensuring $\mathcal{F}_{\Delta t}\subseteq  \mathcal{S}_H$ for $\Delta t$ as large as possible requires that the explicit and implicit method of \eqref{eq:dirkIMEX} are each stable on the imaginary axis for $\Delta t$ as large as possible.


\section{Analysis and Formulation of the IMKG2 and IMKG3 methods} \label{sec:methods}

\subsection{Formulation}
For $q\in \mathbb{N}$ with $q \geq 2$, consider an FSAL $(q+1)$-stage IMEX RK method given by
\begin{small}
\begin{equation}\label{eq:IMEXrk_ls}
\def\arraystretch{1.2}
\begin{array}{c|cccccc}
0 &       &         &        &         &         &\\
c_1       & \alpha_1     &        &         &         &\\
\vdots    & \beta_1     & \ddots &         &         &\\
          & \vdots  &        & \alpha_{q-2} &         &\\
\vdots          & \vdots  &        &         & \alpha_{q-1} &\\ 
c_q       & \beta_{q-1} &        &         &         & \alpha_q & \\ \hline
          & \beta_{q-1} &        &         &         & \alpha_q &

\end{array} \quad \def\arraystretch{1.2}\begin{array}{c|cccccc}
0 & &     &                  &           &               &\\
\hat{c}_1 & \hat{\alpha}_1        & \hat{d}_1 &               &                 &\\
\vdots    & \hat{\beta}_1        & \ddots    & \ddots        &                &\\
          & \vdots           &           & \hat{\alpha}_{q-2} & \hat{d}_{q-2} &     \\
\vdots      &  \vdots          &           &               & \hat{\alpha}_{q-1}  & \hat{d}_{q-1}\\ 
\hat{c}_q & \hat{\beta}_{q-1}    &           &               &                & \hat{\alpha}_q &  \\ \hline
          &   \hat{\beta}_{q-1}  &           &               &                & \hat{\alpha}_q & 

\end{array}
\end{equation}
\end{small}

with $c:=A\textbf{1}_{q+1}$ and $\hat{c}:=\hat{A}\textbf{1}_{q+1}$ ($\mathbf{1}_{q+1}$ defined as in Section \ref{sec:imexstability}).  Define $\alpha := (\alpha_1,\hdots,\alpha_q)^T$, $\hat\alpha := (\hat\alpha_1,\hdots,\hat\alpha_q)^T$, $\beta := (\beta_1,\hdots,\beta_{q-1})^T$,  $\hat\beta := (\hat\beta_1,\hdots,\hat\beta_{q-1})^T$, $\hat\delta := (\hat d_1,\hdots,\hat d_{q-1})^T$.  Any entry in the above Butcher tableau not corresponding to one of $\alpha$, $\hat\alpha$, $\beta$, $\hat\beta$, or $\hat\delta$ is set to zero.  The methods \eqref{eq:IMEXrk_ls} are three-register methods because each stage depends on at most three stages (registers correspond to the number of vectors that must be stored in memory within a time-step).   These methods become two-register methods when $\beta_j = \hat{\beta}_j = 0$ for $j=1,\ldots,q-1$.

\subsection{Accuracy}

Well-known results on polynomial interpolation imply that the order $p$ of any RK method $\def\arraystretch{1.2}\begin{array}{c|c} c & A  \\ \hline & b^T \\ \end{array}$ where $b$ has $l$ nonzero entries satisfies the bound $p \leq 2l$.  Consequently,  methods of the form \eqref{eq:IMEXrk_ls} are at most fourth order accurate.  We focus on second and third order accuracy since fourth and higher order accuracy requires that the method coefficients satisfy at least 52 additional equations \cite[pp. 314-315]{JV2000}.   This restricts the number of free coefficients available for improving stability and efficiency properties unless $q$ is sufficiently large ($q > 6$).

The following theorem gives simplified criteria for methods of the form \eqref{eq:IMEXrk_ls} to be second or third order accurate. 
\begin{theorem}\label{thm:orderthm}
The following two conclusions hold (with the convention that $\alpha_{k}$, $\hat\alpha_{k}$, $\beta_{k}$, $\hat\beta_{k}$, $\hat{d}_{k}$ equal $0$ when $k \leq 0$). 
\begin{enumerate}

\item A method \eqref{eq:IMEXrk_ls} with $q \geq 2$ is second order accurate if and only if the method coefficients satisfy
\begin{equation}\label{eq:orderthm0.1}
\left\{\begin{array}{c}
\alpha_q(\beta_{q-2}+\alpha_{q-1}) = \alpha_q(\hat{\beta}_{q-2}+\hat{\alpha}_{q-1} + \hat{d}_{q-2}) =1/2\\
\hat{\alpha}_q(\hat{\beta}_{q-2}+\hat{\alpha}_{q-1} + \hat{d}_{q-2}) = \hat{\alpha}_q(\beta_{q-2}+\alpha_{q-1}) = 1/2\\
 \alpha_q + \beta_{q-1} = 1 = \hat{\alpha}_q + \hat{\beta}_{q-1}.
\end{array}\right.\end{equation}
If $\beta_{q-1} = \hat{\beta}_{q-1} = \beta_{q-2} = \hat{\beta}_{q-2} = 0$, then this is equivalent to $\alpha_q = 1 = \hat{\alpha}_q$, $\alpha_{q-1} = 1/2$, and $\hat{\alpha}_{q-1} + \hat{d}_{q-2} = 1/2$.  

\item A method \eqref{eq:IMEXrk_ls} with $q \geq 2$ is third order accurate if and only if $\alpha_q = 3/4 = \hat{\alpha}_q$, $\beta_{q-1} = 1/4 = \hat{\beta}_{q-1}$, and the remaining method coefficients satisfy: 
\begin{equation}\label{eq:3rdorderthm}
\left\{\begin{array}{c}
\hat{\alpha}_{q-1}(\hat{\alpha}_{q-2} + \hat{d}_{q-2} + \hat{\beta}_{q-3}) + 2\hat{d}_{q-1}/3 = 2/9 \\
\hat{\alpha}_{q-1}(\alpha_{q-2} + \beta_{q-3}) + 2 \hat{d}_{q-1}/3 = 2/9\\
\hat{\alpha}_{q-1}(\hat{\alpha}_{q-2} + \hat{d}_{q-2} + \hat{\beta}_{q-3}) = 2/9 = \alpha_{q-1}(\alpha_{q-2} + \beta_{q-3})\\
\hat{\alpha}_{q-1} + \hat{d}_{q-1} + \hat{\beta}_{q-2} = 2/3 = \alpha_{q-1} + \beta_{q-2}.
\end{array}\right.
\end{equation}
\end{enumerate}
\end{theorem}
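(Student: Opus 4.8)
The plan is to specialise the classical order conditions for $2$-additive (IMEX) Runge--Kutta methods up to order three --- derived in \cite{Hairer1981}, with explicit formulas in \cite{JV2000} --- to the sparse, FSAL structure of \eqref{eq:IMEXrk_ls}. Write $c = A\mathbf{1}_{q+1}$, $\hat c = \hat A\mathbf{1}_{q+1}$, and let $\gamma,\hat\gamma$ denote the $q$th entries of $c,\hat c$ (so $\gamma = \beta_{q-2}+\alpha_{q-1}$, with $\hat\gamma$ defined analogously). A $2$-additive pair has order $p$ if and only if each of its two constituent methods has order $p$ \emph{and} the mixed-colour coupling conditions hold; necessity of the former is immediate from \eqref{eq:dirkIMEX} by setting $n\equiv 0$ or $s\equiv 0$ in \eqref{eq:ode}. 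For $p\le 3$ the full list is: $b^T\mathbf{1} = \hat b^T\mathbf{1} = 1$; the four conditions $w^T u = 1/2$ with $w\in\{b,\hat b\}$, $u\in\{c,\hat c\}$; the six conditions $w^T(u\odot v) = 1/3$, one for each $w\in\{b,\hat b\}$ and each unordered pair $u,v\in\{c,\hat c\}$; and the eight conditions $w^T X Y \mathbf{1} = 1/6$ with $w\in\{b,\hat b\}$, $X,Y\in\{A,\hat A\}$ (here $\odot$ is the componentwise product).

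The mechanism collapsing every one of these to a short equation is the following. The first rows of $A$ and $\hat A$ vanish, so the first entries of $c$, $\hat c$ are zero and so are the first entries of every vector of the form $Av$, $\hat A v$, or a componentwise product of $c$'s; and by FSAL, $b$ and $\hat b$ are supported only on their first and $q$th entries, with values $\beta_{q-1},\alpha_q$ and $\hat\beta_{q-1},\hat\alpha_q$ respectively. Hence $b^T v = \alpha_q v_q$ and $\hat b^T v = \hat\alpha_q v_q$ for every such $v$, and because each row of $A$ and $\hat A$ is supported on column $1$ together with at most two near-diagonal columns, the surviving entries --- $\gamma$, $\hat\gamma$, and for the order-three conditions also $(A\mathbf 1)_{q-1}$, $(\hat A\mathbf 1)_{q-1}$ and $(XY\mathbf 1)_q$ --- are each a short combination of the $\alpha$'s, $\beta$'s and $\hat d$'s, with out-of-range indices read as $0$ (which is exactly what makes the $q\in\{2,3\}$ cases come out correctly). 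Substituting into the order-one and order-two list and multiplying out the abscissae reproduces \eqref{eq:orderthm0.1}; imposing $\beta_{q-1}=\hat\beta_{q-1}=\beta_{q-2}=\hat\beta_{q-2}=0$ there and simplifying gives the stated two-register specialisation.

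For order three, the bushy-tree condition $b^T(c\odot c) = \alpha_q \gamma^2 = 1/3$ divided by the order-two condition $\alpha_q \gamma = 1/2$ forces $\gamma = 2/3$, hence $\alpha_q = 3/4$, and then $\beta_{q-1} = 1/4$ by the order-one condition; the implicit side is identical, giving $\hat\alpha_q = 3/4$, $\hat\beta_{q-1}=1/4$ and $\hat\gamma = 2/3$ --- the relations $\gamma = 2/3 = \hat\gamma$ being the final line of \eqref{eq:3rdorderthm} and the scalar values the prescriptions preceding it. With these four weights pinned, the remaining mixed bushy-tree conditions reduce to the identity $(3/4)(2/3)^2 = 1/3$, and --- crucially --- the $w=b$ and $w=\hat b$ versions of each tall-tree condition coincide, since the surviving weight is $3/4$ in both; so only four of the eight tall-tree conditions are independent, and evaluating these via the collapse above produces exactly the first three lines of \eqref{eq:3rdorderthm}. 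This delivers both implications simultaneously: what is shown is that the full set of order-$p$ conditions, restricted to the structure, is equivalent as a system to the displayed one --- the ``only if'' direction reading the displayed equations off the full ones, the ``if'' direction feeding them back through the collapse to recover every full order condition, including those just seen to be redundant.

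The step I expect to be the main obstacle is the order-three bookkeeping: carefully expanding the fourteen colour assignments --- especially the telescoping of the $(XY\mathbf 1)_q$ terms, which is where the index offsets appearing in \eqref{eq:3rdorderthm} originate --- determining which tall-tree conditions remain independent after the weights are fixed, and checking line by line that what remains is \eqref{eq:3rdorderthm}. Everything else --- the necessity of the constituent-method conditions, the order-one and order-two reductions, the two-register specialisation, and the degenerate small-$q$ indices --- is then routine.
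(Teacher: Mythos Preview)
Your proposal is correct and follows essentially the same route as the paper's proof: both start from the general IMEX order conditions of \cite{JV2000}, exploit the FSAL/sparsity so that $b^Tv=\alpha_q v_q$ and $\hat b^Tv=\hat\alpha_q v_q$ for the relevant vectors $v$, and for third order use the quotient of the bushy-tree and order-two conditions to pin $\gamma=\hat\gamma=2/3$, $\alpha_q=\hat\alpha_q=3/4$, $\beta_{q-1}=\hat\beta_{q-1}=1/4$, after which the $b$ and $\hat b$ versions of each remaining condition coincide and the tall-tree equations reduce to \eqref{eq:3rdorderthm}. The paper packages the colour enumeration slightly differently---writing a single parametrised template (its ``Eq~1--4'') indexed over all choices of hatted/unhatted coefficients rather than listing the tree conditions individually---but the substitutions and logical flow are the same, and your identification of the tall-tree bookkeeping as the place where the real work sits is accurate.
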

\begin{proof}
To prove the first conclusion assume that $q \geq 2$.  A $(q+1)$-stage IMEX RK method \eqref{eq:dirkIMEX} is second order accurate if and only if \cite[pp. 314-315]{JV2000}:
\begin{equation}\label{eq:2ndorder}
b^T \bold{1}_{q+1} = 1 = \hat{b}^T \bold{1}_{q+1}, \quad b^T c = b^T \hat{c} = \hat{b}^T \hat{c} = \hat{b}^T c = 1/2.
\end{equation}
Substituting the double Butcher tableau \eqref{eq:IMEXrk_ls} into \eqref{eq:2ndorder} shows that Equation \eqref{eq:2ndorder} is equivalent to Equation \eqref{eq:orderthm0.1}.  This proves the first conclusion. 

To prove the second conclusion assume that $q \geq 2$ and define diagonal matrices $C := \text{diag}(0,c_1,\ldots,c_{q})$ and $\hat{C} := \text{diag}(0,\hat{c}_1,\ldots,\hat{c}_{q})$.  A $(q+1)$-stage IMEX RK method \eqref{eq:dirkIMEX} is third order accurate if and only if \cite[pp. 314-315]{JV2000}:
\begin{equation}\label{eq:3rdorder}
\left\{
\begin{array}{lcr}
b^T \bold{1}_{q+1} = 1 = \hat{b}^T \bold{1}_{q+1}, \quad b^T c = b^T \hat{c} = \hat{b}^T \hat{c} = \hat{b}^T c = 1/2\\
b^T A c = b^T A \hat{c} = b^T \hat{A} c = b^T \hat{A} \hat{c} = \hat{b}^T A c = \hat{b}^T A \hat{c} = \hat{b}^T \hat{A} c = \hat{b}^T \hat{A} \hat{c} =  1/6\\
b^T C c = b^T C \hat{c} = b^T \hat{C} c = b^T \hat{C} \hat{c} = \hat{b}^T C c = \hat{b}^T C \hat{c} = \hat{b}^T \hat{C} c = \hat{b}^T \hat{C} \hat{c} = 1/3.
\end{array}\right.
\end{equation}
Substituting the double Butcher tableau \eqref{eq:IMEXrk_ls} into \eqref{eq:3rdorder} shows that the method \eqref{eq:IMEXrk_ls} is third order accurate if and only if the following system of equations is satisfied for every $\alpha_l',\overline{\alpha}_l,\tilde{\alpha}_l \in \{\alpha_l,\hat{\alpha}_l\}$ ; $\beta_l',\overline{\beta}_l,\tilde{\beta}_l \in \{\beta_l,\hat{\beta}_l\}$; $d_l',\overline{d}_l,\tilde{d}_l \in \{0,\hat{d}_l\}$; and $l \in \{q-2,q-1,q\}$:
$$\overline{\beta}_{q-1} + \overline{\alpha}_q = 1 \quad \text{(Eq 1)}$$
$$\overline{\alpha}_q( \alpha_{q-1}' + d_{q-1}' + \beta_{q-2}') = 1/2 \quad \text{(Eq 2)}$$
$$1/6 = \overline{\alpha}_q \tilde{\alpha}_{q-1} (\alpha_{q-2}'+d_{q-2}' + \beta_{q-3}') +\overline{\alpha}_q \tilde{d}_{q-1} (\alpha_{q-1}' + d_{q-1}' + \beta_{q-2}') \quad \text{(Eq 3)}$$
$$1/3 = \overline{\alpha}_q (\tilde{\alpha}_{q-1} + \tilde{d}_{q-1} + \tilde{\beta}_{q-2})(\alpha_{q-1}' + d_{q-1}' + \beta_{q-2}') \quad \text{(Eq 4)}.$$
We say that every version of Eq $k$, $k \in \{\text{1},\text{2},\text{3},\text{4}\}$,  is satisfied if it is satisfied for every $\alpha_l',\overline{\alpha}_l,\tilde{\alpha}_l \in \{\alpha_l,\hat{\alpha}_l\}$ ; $\beta_l',\overline{\beta}_l,\tilde{\beta}_l \in \{\beta_l,\hat{\beta}_l\}$; $d_l',\overline{d}_l,\tilde{d}_l \in \{0,\hat{d}_l\}$; and $l \in \{q-2,q-1,q\}$.  Substituting Eq 2 into Eq 4 implies that
\begin{equation}\label{eq:orderthm2}
\alpha_{q-1} + d_{q-1} + \beta_{q-2} = 2/3 = \hat{\alpha}_{q-1} + \hat{d}_{q-1} + \hat{\beta}_{q-2}.
\end{equation}
Eq 1-2 together with \eqref{eq:orderthm2} then imply that
\begin{equation}\label{eq:orderthm3}
\alpha_q = 3/4 = \hat{\alpha}_q, \quad \beta_{q-1} = 1/4 = \hat{\beta}_{q-1}.
\end{equation}
On the other hand, if \eqref{eq:orderthm2} \& \eqref{eq:orderthm3} are satisfied then so are all versions of Eq 1, Eq 2, and Eq 4.  Thus, \eqref{eq:orderthm2} \& \eqref{eq:orderthm3} are satisfied if and only if all versions of Eq 1, Eq 2, and Eq 4 are satisfied.  Substituting \eqref{eq:orderthm2}-\eqref{eq:orderthm3} into Eq 3 results in 
\begin{equation}\label{eq:orderthm4}
\tilde{\alpha}_{q-1} (\alpha_{q-2}'+d_{q-2}' + \beta_{q-3}') +  2 \tilde{d}_{q-1}/3 = 2/9.
\end{equation}
It then follows that every version of Eq 1-4 being satisfied is equivalent to \eqref{eq:orderthm2}-\eqref{eq:orderthm4}.  Therefore, every version of Eq 1-4 being satisfied is equivalent to \eqref{eq:3rdorderthm} and \eqref{eq:orderthm3}.  This completes the proof of the second conclusion.
\end{proof}

Theorem \ref{thm:orderthm} only constrains $\{\alpha_{j},\beta_j,\hat{\alpha}_j,\hat{\beta}_j,\hat{d}_j :j=l,\ldots,q\}$ where $l=q-1$ for second order accuracy or $l=q-2$ for third order accuracy.  The remaining coefficients can be chosen to improve stability properties or reduce the number of implicit stages.

\subsection{Stability basics}\label{sec:imkgstability}

The stability polynomial for the explicit method of \eqref{eq:IMEXrk_ls} is

\begin{equation}\label{eq:expoly}
P(z) = 1 + \sum_{k=1}^{q}\left[ \left(\prod_{j=0}^{k-2} \alpha_{q-j}\right)(\alpha_{q-k+1}+\beta_{q-k})\right] z^k
\end{equation}
with the convention that $\prod_{j=0}^{-1} \alpha_{q-j} = 1$ and $\beta_0 = 0$.  The formulas in \eqref{eq:expoly} defining the coefficients of $P(z)$ can then be coupled with the order conditions from Theorem \ref{thm:orderthm} to derive accurate IMEX RK methods where the explicit method has a KGO or KGNO stability polynomial.

Let $\hat{R}(z) = \hat{P}(z)/\hat{Q}(z)$ be the stability function of the implicit method of \eqref{eq:IMEXrk_ls}, where $\hat{Q}(z) = \prod_{j=1}^{q-1}(1-z\hat{d}_j)$, $\hat{P}(z) :=1 + \sum_{j=1}^{q}\hat{\sigma}_j z^j$, and
\begin{align}\label{eq:ap1}
\hat{\sigma}_1 = & \text{ } \hat{\alpha}_q + \hat{\beta}_{q-1}-\sum_{j=1}^{q-1}\hat{d}_j, \\
\hat{\sigma}_2 = & \text{ } \hat{\alpha}_q (\hat{\alpha}_{q-1} + \hat{\beta}_{q-2}) -\hat{\alpha}_q \sum_{j=1}^{q-2}\hat{d}_j - \sum_{j=1}^{q-1} \hat{\beta}_{q-1} \hat{d}_j + \sum_{j\neq k \leq q-1} \hat{d}_j \hat{d}_k, \label{eq:ap2}\\
\hat{\sigma}_3 = & \text{ } \hat{\alpha}_q(\hat{\alpha}_{q-1}\hat{\alpha}_{q-2} + \hat{\beta}_{q-3}) -  \hat{\alpha}_{q-1}\hat{\alpha}_q \sum_{j=1}^{q-3}\hat{d}_j - \sum_{j=1}^{q-2} \alpha_q \beta_{q-2} d_j \label{eq:ap3} \\
&\quad + \hat{\alpha}_q \sum_{j\neq k \leq q-2} \hat{d}_j \hat{d}_k + \sum_{j,k \leq q-1} \hat{\beta}_{q-1} d_j d_k - \sum_{j\neq k \neq l \leq q-1}\hat{d}_j \hat{d}_k \hat{d}_l. \nonumber
\end{align}

The following theorems give sufficient conditions for I-, A-, and L-stability of the implicit method of \eqref{eq:IMEXrk_ls}.   Note that I-stability is a prerequisite for IMEX RK methods to have H-stability regions with desirable properties.
\begin{theorem}\label{thm:Lstable}
Let $k\in \{1,\ldots,q\}$ be such that $\hat{\sigma}_j = 0$ for $j=k,\ldots,q$.  Then $\hat{R}(z) \rightarrow 0$ as $|z|\rightarrow \infty$ if and only if the implicit method has at least $k$ nontrivial implicit stages.
\end{theorem}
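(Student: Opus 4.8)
The plan is to compute $\hat R(z) = \hat P(z)/\hat Q(z)$ explicitly as $|z|\to\infty$ and track the leading-order behavior of numerator and denominator in terms of the number of nontrivial (i.e. nonzero-diagonal) implicit stages. By hypothesis $\hat\sigma_j = 0$ for $j = k,\dots,q$, so $\hat P$ is a polynomial of degree at most $k-1$: $\hat P(z) = 1 + \hat\sigma_1 z + \cdots + \hat\sigma_{k-1}z^{k-1}$. The denominator is $\hat Q(z) = \prod_{j=1}^{q-1}(1 - z\hat d_j)$, whose degree equals the number of nonzero entries among $\hat d_1,\dots,\hat d_{q-1}$. Since the diagonal of $\hat A$ in the tableau \eqref{eq:IMEXrk_ls} consists of $\hat\alpha_1,\dots,\hat\alpha_q$ but the structure shows the actual repeated-stage diagonal contributing to $\hat Q$ comes through the $\hat d_j$'s, I first want to pin down precisely that ``number of nontrivial implicit stages'' equals $\deg\hat Q$ plus one (or exactly $\deg\hat Q$, depending on the FSAL convention) — this bookkeeping is the first step and must be done carefully against the tableau.

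Granting that, the argument is a degree comparison. As $|z|\to\infty$, $\hat R(z) \sim (\hat\sigma_{k-1} z^{k-1})/(\prod \hat d_j \cdot z^{\deg\hat Q})$ if $\hat\sigma_{k-1}\neq 0$, so $\hat R(z)\to 0$ iff $\deg\hat Q > k-1$, i.e. iff $\hat Q$ has at least $k$ factors $(1-z\hat d_j)$ with $\hat d_j\neq 0$, i.e. iff there are at least $k$ nontrivial implicit stages. For the ``only if'' direction one notes that if $\deg\hat Q \le k-1$ then $\hat R$ is a ratio of polynomials of equal degree (when $\deg\hat Q = k-1$) or the numerator dominates (when $\deg \hat Q < k-1$), and in either case $|\hat R(z)|$ tends to a nonzero limit or to $\infty$, hence does not go to $0$. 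The one subtlety is the possibility that $\hat\sigma_{k-1} = 0$ after all, which would make $\hat P$ have degree strictly less than $k-1$; but $k$ is chosen as the smallest index with the vanishing property (``let $k$ be such that $\hat\sigma_j = 0$ for $j = k,\dots,q$'' — read as the threshold), so $\hat\sigma_{k-1}\neq 0$ by minimality, and if one does not adopt the minimality reading then the statement should be interpreted with $k$ replaced by the true degree; I would add a sentence clarifying this.

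The main obstacle I anticipate is not the limiting computation — that is routine — but establishing the exact correspondence between the algebraic degree of $\hat Q(z) = \prod_{j=1}^{q-1}(1 - z\hat d_j)$ and the combinatorial count of ``nontrivial implicit stages'' as defined via nonzero diagonal entries of $\hat A$ in \eqref{eq:IMEXrk_ls}. One must check that each nonzero $\hat d_j$ contributes exactly one genuine implicit solve (the stage $g_{m,j}$ with $\hat A_{j,j} = \hat d_j \neq 0$ requires solving a nonlinear equation), that zero $\hat d_j$'s correspond to explicit-type stages within the implicit tableau, and that the FSAL last stage is accounted for consistently. Once that dictionary is fixed, the proof is a two-line degree argument; I would structure the write-up as: (i) identify $\deg\hat P \le k-1$ from the hypothesis and $\deg\hat Q = \#\{j : \hat d_j\neq 0\} = (\text{number of nontrivial implicit stages})$; (ii) conclude $\hat R(z)\to 0 \iff \deg\hat Q \ge k \iff$ at least $k$ nontrivial implicit stages.
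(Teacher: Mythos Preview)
Your approach is correct and is essentially the paper's own proof: from the hypothesis one has $\deg(\hat P) = k-1$, and since $\hat Q(z) = \prod_{j=1}^{q-1}(1 - z\hat d_j)$ one has $\deg(\hat Q) = \#\{j : \hat d_j \neq 0\}$, so $\hat R(z)\to 0$ iff $\deg(\hat Q) \ge k$ iff there are at least $k$ nontrivial implicit stages. Your dictionary worry dissolves on reading the tableau \eqref{eq:IMEXrk_ls}, whose diagonal is exactly $(0,\hat d_1,\ldots,\hat d_{q-1},0)$, so the nontrivial implicit stages are precisely those with $\hat d_j\neq 0$; and the paper, like you, tacitly takes $k$ minimal so that $\hat\sigma_{k-1}\neq 0$ and $\deg(\hat P)=k-1$.
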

\begin{proof}
Since $\hat{\alpha}_j = 0$ for $j=k,\ldots,q$ it follows that $\text{deg}(\hat{P}(z)) = k-1$.  We have $\hat{R}(z) \rightarrow 0$ as $|z| \rightarrow \infty$ if and only if $\text{deg}(\hat{Q}(z)) > \text{deg}(\hat{P}(z))$ which is the case if and only if the implicit method has at least $k$ nontrivial implicit stages.
\end{proof}
We refer to a method that satisfies $\hat{R}(\infty) = 0$ (i.e. $\hat{R}(z) \rightarrow 0$ as $|z| \rightarrow \infty$) as a VI method (vanishes at infinity method).  We use this term to distinguish methods that are I-stable, not L-stable, but still satisfy $\hat{R}(\infty)=0$.  
\begin{theorem}\label{thm:Istable}
Suppose that $\hat{Q}(iy) \neq 0$ for all $y \in \mathbb{R}$.  Assume $\hat{\sigma}_4,\ldots,\hat{\sigma}_{q}= 0$ and let $\gamma_1$, $\gamma_2$, and $\gamma_3$ be given by
$$\gamma_1 := \hat{\sigma}_1^2 - 2\hat{\sigma}_2 - \sum_{j=1}^{q-1} d_j^2, \quad \gamma_2 :=  \hat{\sigma}_2^2 -2 \hat{\sigma}_1 \hat{\sigma}_3 - \sum_{j\neq k}\hat{d}_j \hat{d}_k, \quad \gamma_3 := \hat{\sigma}_3^2 - \sum_{j \neq k \neq l} \hat{d}_j^2 \hat{d}_k^2 \hat{d}_l^2.$$
 If $\gamma_1,\gamma_2,\gamma_3 \leq 0$, then the implicit method of \eqref{eq:IMEXrk_ls} is I-stable.  If in addition $\hat{d}_j \geq 0$ for $j=1,\ldots,q-1$, then the implicit method of \eqref{eq:IMEXrk_ls} is A-stable.
\end{theorem}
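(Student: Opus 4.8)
The plan is to reduce both I-stability and A-stability to an elementary inequality for a real polynomial. For I-stability one must show $|\hat R(iy)|\le 1$ for every $y\in\mathbb{R}$; since $\hat Q(iy)\ne 0$ by hypothesis, this is equivalent to $|\hat P(iy)|^2\le |\hat Q(iy)|^2$ for all $y\in\mathbb{R}$. First I would expand the left side using $\hat\sigma_4=\dots=\hat\sigma_q=0$: writing $\hat P(iy)=(1-\hat\sigma_2 y^2)+i(\hat\sigma_1 y-\hat\sigma_3 y^3)$ gives
\[
|\hat P(iy)|^2 = 1+(\hat\sigma_1^2-2\hat\sigma_2)\,y^2+(\hat\sigma_2^2-2\hat\sigma_1\hat\sigma_3)\,y^4+\hat\sigma_3^2\,y^6 .
\]
For the right side, $|\hat Q(iy)|^2=\prod_{j=1}^{q-1}(1+\hat d_j^2 y^2)=\sum_{k=0}^{q-1}e_k\,y^{2k}$, where $e_k$ denotes the $k$-th elementary symmetric polynomial in $\hat d_1^2,\dots,\hat d_{q-1}^2$.

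Subtracting, $|\hat Q(iy)|^2-|\hat P(iy)|^2$ is a polynomial in $t:=y^2$ with zero constant term whose coefficients of $t$, $t^2$, and $t^3$ are exactly $-\gamma_1$, $-\gamma_2$, and $-\gamma_3$ — this identification is the single place where the precise definitions of $\gamma_1,\gamma_2,\gamma_3$ are used — while every remaining coefficient equals some $e_k$ and is therefore nonnegative (an elementary symmetric function of the nonnegative numbers $\hat d_j^2$). Hence, if $\gamma_1,\gamma_2,\gamma_3\le 0$, all coefficients of this polynomial in $t$ are nonnegative, so it is $\ge 0$ for every $t=y^2\ge 0$. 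This gives $|\hat R(iy)|\le 1$ on the whole imaginary axis, i.e. I-stability.

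For A-stability I would layer the maximum modulus principle on top of I-stability. When $\hat d_j\ge 0$ for all $j$, every finite zero of $\hat Q(z)=\prod_j(1-z\hat d_j)$ is of the form $z=1/\hat d_j>0$, so $\hat Q$ has no zero in the open left half-plane; with $\hat Q(iy)\ne 0$ this makes $\hat R=\hat P/\hat Q$ analytic on $\{\operatorname{Re}z<0\}$ and continuous on $\{\operatorname{Re}z\le 0\}$. One also needs $\deg\hat P\le\deg\hat Q$ so that $\hat R$ has a finite limit at infinity: if $\hat Q$ has fewer than three nonzero factors, the $\hat d$-product sums in $\gamma_3$ (and, if still fewer, in $\gamma_2$ and then $\gamma_1$) vanish, and $\gamma_3\le 0$ forces $\hat\sigma_3=0$ (then $\gamma_2\le 0$ forces $\hat\sigma_2=0$, then $\gamma_1\le 0$ forces $\hat\sigma_1=0$), so $\deg\hat P$ never exceeds $\deg\hat Q$. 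Thus $\hat R$ is a bounded rational function on the closed left half-plane, analytic in its interior, and by the maximum modulus principle $\sup_{\operatorname{Re}z\le 0}|\hat R(z)|$ is attained on the boundary — the imaginary axis together with $\infty$. This supremum is $\le 1$ by the I-stability just proved (the value at $\infty$ being the limit of $\hat R(iy)$ as $y\to\infty$), so $|\hat R(z)|\le 1$ for all $\operatorname{Re}z\le 0$, which is A-stability.

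The conceptual skeleton — reduce I-stability to positivity of a polynomial in $y^2$, use that elementary symmetric functions of squares are nonnegative, and upgrade to A-stability by the maximum principle once the poles are confined to the right half-plane — is standard. The one genuinely fiddly step is the algebraic bookkeeping in the second paragraph: checking that, after cancellation, the $t^2$- and $t^3$-coefficients of $|\hat Q(iy)|^2-|\hat P(iy)|^2$ collapse precisely to $-\gamma_2$ and $-\gamma_3$, i.e. that $\hat\sigma_2^2-2\hat\sigma_1\hat\sigma_3$ and $\hat\sigma_3^2$ line up correctly against the symmetric functions $e_2$ and $e_3$ of the $\hat d_j^2$. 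I expect that, not the analysis, to be where the real work sits.
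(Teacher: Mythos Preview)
Your proposal is correct and follows essentially the same route as the paper: expand $|\hat P(iy)|^2$ and $|\hat Q(iy)|^2=\prod_j(1+\hat d_j^2 y^2)$, identify the coefficients of $y^2,y^4,y^6$ in their difference with $\gamma_1,\gamma_2,\gamma_3$, observe the higher-order coefficients are elementary symmetric functions of the nonnegative $\hat d_j^2$, and then invoke the maximum modulus principle for A-stability once the poles are pushed into the right half-plane. Your extra care in verifying $\deg\hat P\le\deg\hat Q$ (so that $\hat R$ is bounded at infinity and the maximum principle actually applies) is a detail the paper omits; otherwise the two arguments are the same.
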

\begin{proof}
An RK method with $\hat{Q}(iy) \neq 0$ for all $y \in \mathbb{R}$ is I-stable if and only if $|\hat{P}(iy)|^2 - |\hat{Q}(iy)|^2 \leq 0$ for all $y \in \mathbb{R}$.  From $\hat{\sigma}_4 = \ldots = \hat{\sigma}_q = 0$ it follows that 
 $$|\hat{P}(iy)|^2 = 1 + (\hat\sigma_1^2 - 2\hat\sigma_2)y^2 + (\hat\sigma_2^2 -2 \hat\sigma_1 \hat\sigma_3)y^4 + \hat\sigma_3^2 y^6$$
 $$ |\hat{Q}(iy)|^2 = 1 + \sum_{j=1}^{q-1}\hat{d}_j^2 y^2 + \sum_{j\neq k}^{q-1} \hat{d}_j^2 \hat{d}_k^2 y^4 + \sum_{j \neq k \neq l}^{q-1} \hat{d}_j \hat{d}_k\hat{d}_l y^6 + h(y)y^8$$ 
where $h(y)$ is an even polynomial in $y$ with positive coefficients.  Therefore 
$$|\hat{P}(iy)|^2-|\hat{Q}(iy)|^2 = \gamma_1 y^2 + \gamma_2 y^4 + \gamma_3 y^6 - \xi(y)y^8 \leq \gamma_1 y^2 + \gamma_2 y^4 + \gamma_3 y^6, \quad y \in \mathbb{R}.$$
I-stability of the implicit method of \eqref{eq:IMEXrk_ls} follows if $\gamma_1,\gamma_2,\gamma_3 \leq 0$.  The conclusion on A-stability follows from the maximum principal and the fact that $\hat{R}(z)$ is holomorphic on $\{z \in \mathbb{C}:\text{Re}(z) < 0\}$ if and only if $\hat{d}_1,\ldots,\hat{d}_{q-1}\geq 0$.
\end{proof} 
Equations \eqref{eq:ap1}-\eqref{eq:ap3} are used to ensure the hypotheses of Theorem \ref{thm:Istable} are satisfied.

\subsection{Definition, derivation, and H-stability of IMKG2-3 methods}\label{sec:Hstability}

We first define the IMKG2-3 methods:
\begin{definition}\label{def:imexkg}
  An IMKG2 method \eqref{eq:IMEXrk_ls} has $\beta_j = \hat{\beta}_j = 0$ for $j=1,\ldots,q-1$, is second order accurate, has an explicit method with a KGO or KGNO stability polynomial, and an implicit method that is I-stable.  An IMKG3 method \eqref{eq:IMEXrk_ls} is third order accurate, has an explicit method with a KGNO stability polynomial, and an implicit method that is I-stable.
\end{definition} 
Note that \eqref{eq:expoly} and the definition of KGO and KGNO polynomials implies that IMKG2-3 methods must have $\alpha_j,\hat\alpha_j \neq 0$ for $j=1,\hdots,q$.  To demonstrate the construction of an IMKG2-3, we provide the following example:
\begin{examp}\label{ex:imkg343a}
  We construct an IMKG3 method with $q = 4$.  The KGNO polynomial with $q = 4$ is $P(z) = 1 + z + z^2/2 + z^3/6 + z^4/24$.  Equation \eqref{eq:expoly} implies that $\alpha_1 = 1/(24 \alpha_2 \alpha_3 \alpha_4)$.  Third order accuracy (Theorem \ref{thm:orderthm}) and enforcement of $\hat{\beta}_1 = \beta_1$ and $\hat{\beta}_2 = \beta_2$ require that $\alpha_4 = \hat{\alpha}_4 = 3/4$, $\beta_3 = \hat{\beta}_3 = 1/4$, and
  $$\hat{\alpha}_{3} + \hat{d}_{3} + \beta_2 = 2/3 = \alpha_{3} + \beta_{2}, \quad \alpha_{3}(\hat{\alpha}_{2} + \hat{d}_{2} + \beta_1) = 2/9 = \alpha_{3}(\alpha_2 + \beta_1)$$
  $$\hat{\alpha}_{3}(\hat{\alpha}_{2} + \hat{d}_{2} + \beta_1) + 2\hat{d}_{3}/3 = 2/9 = \hat{\alpha}_{3}(\alpha_2+\beta_1) + 2 \hat{d}_{3}/3.$$
  Assume that $\hat{d}_2$, $\hat{d}_3$, $\beta_1$, and $\alpha_2$ have been specified.  We then set 
  $$\alpha_3 = \frac{2}{9(\alpha_2 + \beta_1)}, \quad \beta_2 = 2/3-\alpha_3, \quad \hat{\alpha}_3 = \frac{2/9-2\hat{d}_3/3}{\alpha_2+\beta_1},\quad \beta_2 = 2/3-\hat{d}_3-\hat{\alpha}_3$$
   to ensure third order accuracy.  Enforcing $\text{deg}(\hat{P}(z)) = 2$ requires that
  \begin{equation*}
    \small{\hat{d}_1 = \frac{ \hat{\alpha}_3\hat{\alpha}_4\hat{\beta}_1 + \hat{\beta}_3\hat{d}_2\hat{d}_3 -\hat{\alpha}_2\hat{\alpha}_3\hat{\alpha}_4  -\hat{\alpha}_4\hat{\beta}_2\hat{d}_2}{\hat{\alpha}_4\hat{d}_2+ \hat{\beta}_3\hat{d}_2 + \hat{\beta}_3 \hat{d}_3- \hat{d}_2\hat{d}_3- \hat{\alpha}_3\hat{\alpha}_4 - \hat{\alpha}_4\hat{\beta}_2 }, \quad \hat{\alpha}_1 = \frac{ \hat{\alpha}_3\hat{\alpha}_4\hat{\beta}_1\hat{d}_1- \hat{\alpha}_4\hat{\beta}_2\hat{d}_1\hat{d}_2+ \hat{\beta}_3\hat{d}_1\hat{d}_2\hat{d}_3}{\hat{\alpha}_2\hat{\alpha}_3\hat{\alpha}_4}.}
  \end{equation*}
  The values of $\hat{d}_2$, $\hat{d}_3$, $\beta_1$, and $\alpha_2$ can be chosen so that the implicit method is I- or A-stable and improve the H-stability region.  The choice $\hat{d}_3 = \hat{d}_2 = 1$, $\alpha_2 = 2/3$, and $\beta_1 = 1/3$ results in the IMKG343a method.  Theorems \ref{thm:Lstable} and  \ref{thm:Istable} imply IMKG343a is I-stable and a VI method.
\end{examp}
In the appendix we give the double Butcher tableaux for several IMKG2 methods with $q = 3,4,5$ (Table \ref{tab:imkg2methods}) and several IMKG3 methods with $q = 4,5$ (Table \ref{tab:imkg3methods}). 

To demonstrate how $\hat{d}_2$, $\hat{d}_3$, $\beta_1$, and $\alpha_2$ in Example \ref{ex:imkg343a} might be chosen to improve the H-stability region, we focus on spatially-discrete, hyperbolic-type PDEs on a bounded domain.  For a given spatial resolution, define $M_l := \max K_l$ for $l=x,z$ ($K_x$ and $K_z$ defined as in Section \ref{sec:imexstability}).  Note that $M_l < \infty$ and that there exists $m_x,m_z >0$, independent of resolution, such that $0 < m_l \leq \min K_l$ for $l=x,z$.  We define the vertical-to-horizontal aspect ratio $\chi := M_z/M_x$.  In our target application HOMME-NH (Section \ref{sec:nh}), we anticipate using about a 1km vertical and $\geq$3km horizontal resolutions, which results in $\chi > 2$. 


Consider an $r$-stage IMKG2-3 method with H-stability region $\mathcal{S}_H$.  Let $r_0 := r-1$ if the method has a KGO stability polynomial and $r_0 := \sqrt{(r-1)^2-1}$ if the method has a KGNO stability polynomial.  For $\gamma, n_0 > 0$ define
$$\mathcal{T}_{n_0} = \{(x,z) : z \geq 0, x\in [0,n_0] \},\text{ } \mathcal{E}_{\gamma,n_0} :=\{ ( x,z): x \in [0,n_0], z = 0 \text{ or } z \geq \gamma x  \} \cap \mathcal{T}_{n_0}.$$
These regions are illustrated in Figure \ref{fig:stab_sets} for $\mathcal{E}_{1/3,7/2}$ and $\mathcal{T}_{7/2}$.  Ideally, a method will have $\mathcal{T}_{r_0} \subseteq \mathcal{S}_H$, as is the case for IMKG232b (see Figure \ref{fig:Hstabregion_imkg232}).  In this case, $\mathcal{F}_{\Delta t } \subseteq \mathcal{T}_{r_0} \subseteq \mathcal{S}_H$ ($\mathcal{F}_{\Delta t}$ defined as in Section \ref{sec:imexstability}) for $\Delta t > 0$ such that $\Delta t M_x \in (0,r_0)$.  Thus, $\mathcal{T}_{r_0} \subseteq \mathcal{S}_H$ implies that stable time-steps are completely determined by the explicit method (via $r_0$) and the horizontal wave numbers (via $M_x$). 

Not all IMKG2-3 methods satisfy $\mathcal{T}_{r_0} \subseteq \mathcal{S}_H$ (see Figure \ref{fig:252stabdemo}).   It is also unclear what conditions on the method coefficients ensure that this containment holds.  However, methods where $\mathcal{T}_{r_0} \nsubseteq \mathcal{S}_H$, such as IMKG252b, can still be efficient for the values of $M_l$ and $m_l$ we expect in our target application HOMME-NH (see Table \ref{tab:maxusablestep} and Figures \ref{fig:runtime1}-\ref{fig:runtime2}).  This motivates describing a sub-optimal case where $\mathcal{T}_{r_0} \not\subseteq \mathcal{S}_H$.  Assume that $\gamma \in (0,m_z/M_x)$, $n_0 \approx r_0$, and $\mathcal{E}_{\gamma,n_0} \subseteq S_H$.  If $\Delta t > 0$ is such that $\Delta t M_x \in (0,n_0)$, then $\mathcal{F}_{\Delta t } \subseteq \mathcal{E}_{\gamma,n_0} \subseteq \mathcal{S}_H$. Thus, in this sub-optimal case, stable time-steps are still determined by $n_0 \approx r_0$ and $M_x$ if $\gamma \in (0,m_z/M_x)$.  

With this analysis in place, we have a methodology for choosing free method coefficients to improve the H-stability region of IMKG2-3 methods.  We first search for coefficients such that $\mathcal{T}_{r_0} \subseteq \mathcal{S}_{H}$.  If this fails, we then enforce $\mathcal{E}_{\gamma,n_0} \subseteq \mathcal{S}_{H}$ for $\gamma \geq 0$ and $r_0-n_0 \geq 0$ as small as possible.  Note that $m_z$ depends on the depth and scale height of the model atmosphere.  The value of $M_x$ depends on the horizontal resolution and is affected by our use of hyper-diffusion to stabilize high frequency modes arising from the spectral element discretization.  This makes it difficult to determine exactly how small $\gamma$ must be chosen so that $\mathcal{E}_{\gamma,n_0} \subseteq \mathcal{S}_{H}$ for $n_0 \approx r_0$.

Despite this drawback our analysis is still be predictive in many cases.  Consider Figures \ref{fig:Hstabregion_imkg232}-\ref{fig:Hstabregion_imkg242}.  If $\chi >> 1$, then these figures and our analysis predict that the maximum stable time-step of IMKG232a should be about 50\% of that of IMKG232b and that the maximum stable time-step of IMKG242a should be about 60\% that of IMKG242b.  If $\alpha \approx 1$, then we would predict IMKG232a-b and IMKG242a-b to have nearly equal maximum stable time-steps.  These predictions agree with the empirical results in Table \ref{tab:maxusablestep} except in the small planet $\times 1$ run for the IMKG242b method, which terminated from a solver failure rather than an instability for time-steps larger than 225 seconds.

\begin{figure}
\centering
\includegraphics[scale=0.4]{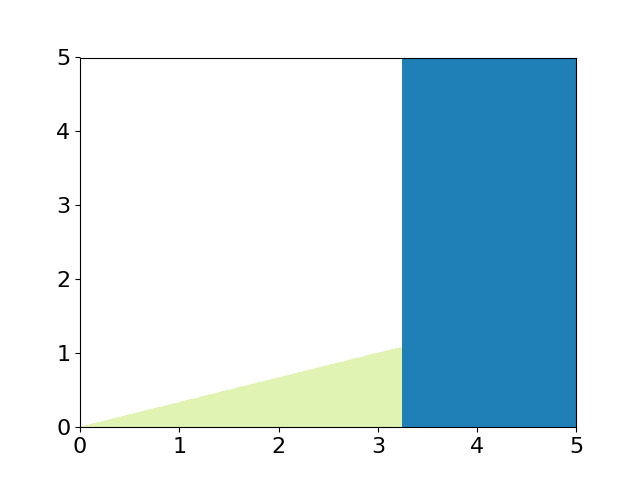}
\caption{The set $\mathcal{E}_{1/3,7/2}$ (dark blue region) and $\mathcal{T}_{7/2}$ (union of the unshaded white and light green regions). }.
\label{fig:stab_sets}
\end{figure}

\begin{figure}
\centering
\includegraphics[scale=0.4]{./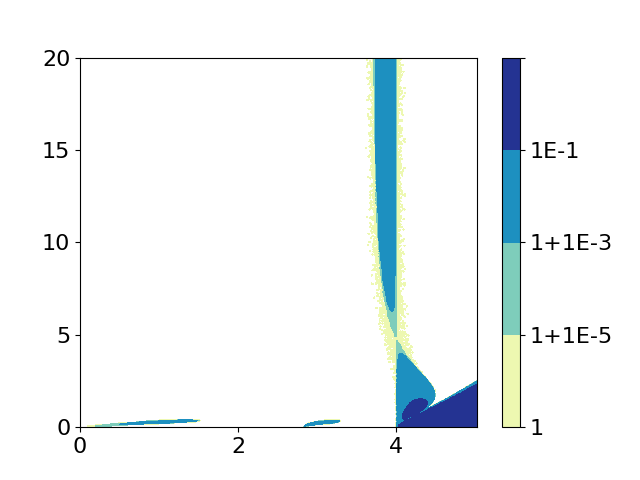}\includegraphics[scale=0.4]{./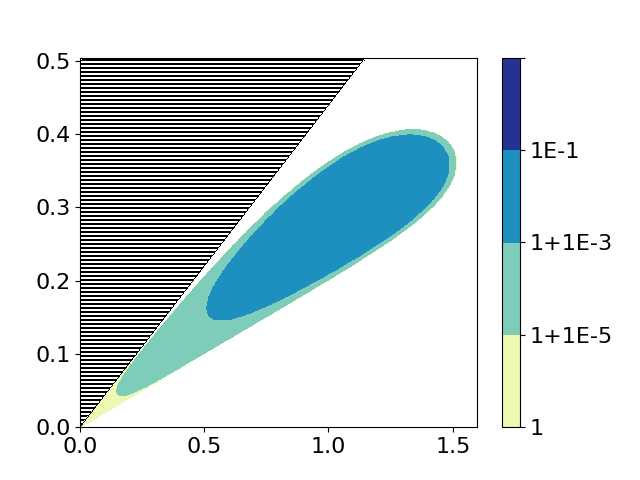}
\caption{H-stability region of IMKG252a ($z$ vs $x$ where the unshaded white and striped regions represent the H-stability region).  The double Butcher tableau of IMKG252b is given in Table \ref{tab:imkg2methods}. The striped region denotes the region above the line $z = \gamma x$ and the H-stability region contains $E_{3.5,\gamma}$ where $\gamma \approx .45$.   The blue and yellow shaded region denotes the modulus of the largest eigenvalue of the stability matrix $R_H$ when this modulus exceeds $1$.   }
\label{fig:252stabdemo}
\end{figure}

 \begin{figure}
\centering
\includegraphics[scale=0.4]{./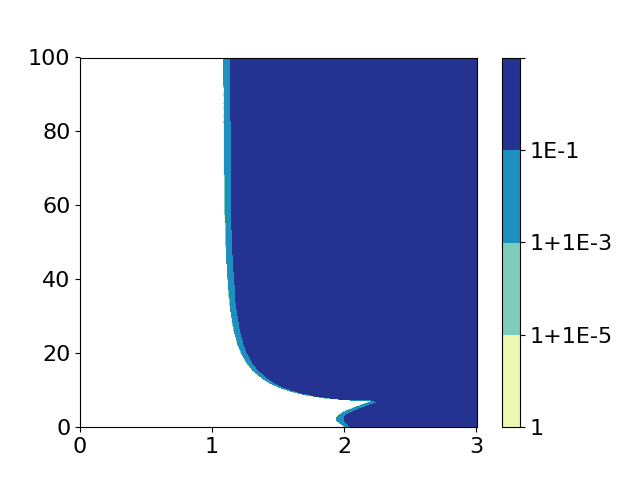}\includegraphics[scale=0.4]{./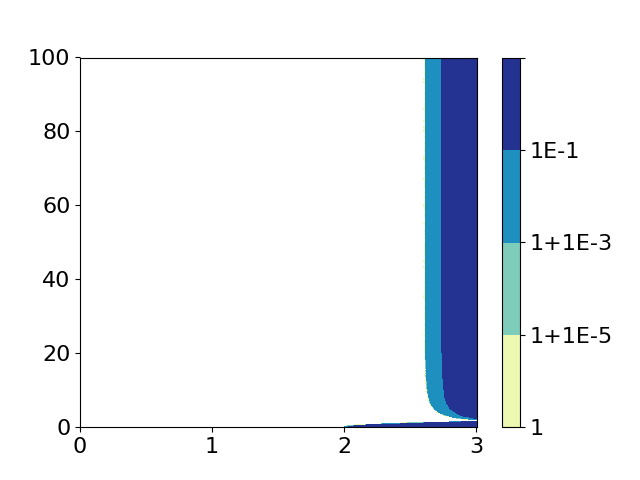}
\caption{H-stability regions ($z$ vs $x$ where the unshaded white region represents the H-stability region) of the IMKG232a (left) and IMKG232b (right) methods (double Butcher tableaux given in Table \ref{tab:imkg2methods}).  The blue and yellow shaded region denotes the modulus of the largest eigenvalue of the stability matrix $R_H$ when this modulus exceeds $1$. }
\label{fig:Hstabregion_imkg232}
\end{figure}

 \begin{figure}
\centering
\includegraphics[scale=0.4]{./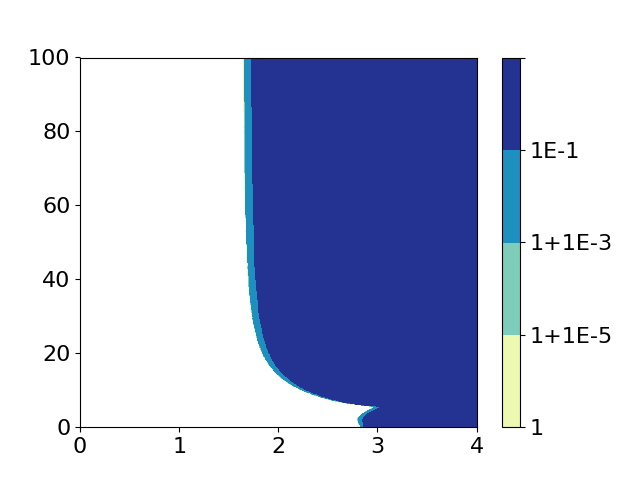}\includegraphics[scale=0.4]{./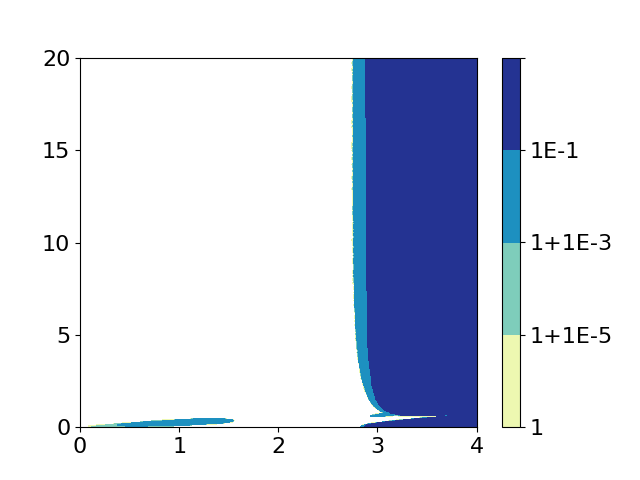}
\caption{H-stability regions ($z$ vs $x$ where the unshaded white region represents the H-stability region) of the IMKG242a (left) and IMKG242b (right) methods (double Butcher tableaux given in Table \ref{tab:imkg2methods}).   The blue and yellow shaded region denotes the modulus of the largest eigenvalue of the stability matrix $R_H$ when this modulus exceeds $1$. }
\label{fig:Hstabregion_imkg242}
\end{figure}


\section{The HOMME-NH nonhydrostatic dynamic core}\label{sec:nh}

In this section we introduce the HOMME-NH nonhydrostatic atmosphere model and its horizontally explicit vertically implicit (HEVI) partitioning.  HOMME-NH is a more realistic test bed for the performance of the IMKG2-3 methods for HEVI partitionings than low-dimensional test equations such as \eqref{eq:hevitesteq}.

\subsection{Formulation of HOMME-NH}\label{sec:hommenh}

A comprehensive derivation of HOMME-NH is given in \cite{hommenh}.  It is essentially a variant of the Laprise formulation \cite{Laprise1992}, where the shallow atmosphere and traditional approximations, defined as in \cite{Vallis2017},  are made.  The governing equations of HOMME-NH are given by

\begin{equation}\label{eq:nhtheta}
\arraycolsep=1.2pt\def\arraystretch{1.5}
\left\{\begin{array}{lcr}
\bo{u}_t + (\nabla_{\eta} \times \bo{u} + 2\Omega)\times \bo{u} + \frac{1}{2}\nabla_{\eta}(\bo{u} \cdot \bo{u}) + \dot{\eta} \pd{\bo{u}}{\eta}{} + \frac{1}{\rho}\nabla_{\eta} p + \mu \nabla_{\eta}\phi = 0, \quad \dot{\eta} := d\eta/dt\\

w_t + \bo{u} \cdot \nabla_{\eta} w + \dot{\eta} \pd{w}{\eta}{} + \mathfrak{g}(1- \mu) = 0, \quad \mu := \pd{p}{\eta}{}/\pd{\pi}{\eta}{} \\

\phi_t + \bo{u} \cdot \nabla_{\eta} \phi + \dot{\eta} \pd{\phi}{\eta}{} - \mathfrak{g}w  = 0 \\

\Theta_t + \nabla_{\eta} \cdot (\Theta \bo{u}) + \frac{\partial}{\partial \eta}(\Theta \dot{\eta})  = 0, \quad \Theta := \pd{\pi}{\eta}{} \theta_v \\

\frac{\partial}{\partial t} (\pd{\pi}{\eta}{}) + \nabla_{\eta} \cdot (\pd{\pi}{\eta}{} \bo{u}) + \frac{\partial}{\partial \eta} \left(\pd{\pi}{\eta}{} \dot{\eta}\right) = 0. \\
\end{array}
\right.\end{equation}
The horizontal spatial variables lie on a spherical domain, while the vertical coordinate $\eta$ is the mass-based hybrid terrain-following coordinate introduced in \cite{Kasahara1974}, with $\eta=1$ representing the model surface and $\eta=\eta_\text{top}$ the model top.   
The vector $\bo{v} = (u,v,w)^T$ is the fluid velocity with $\bo{u}:=(u,v)^T$, $\theta_v$ is the virtual potential temperature, $\mathfrak{g}$ is the gravitational constant, $\phi = \mathfrak{g}z$ is the geopotential, $\rho$ is the fluid density, $p$ is the pressure, $2\Omega \times \bo{u}$ is the Coriolis term with rotation rate $\Omega$, and the symbol $\nabla_{\eta}$ represents the horizontal gradient with respect to $\eta$.  The variable $\pi$ represents the hydrostatic pressure defined so that \begin{equation}\label{eq:hydroapprox}
\pd{\pi}{z}{} = -\rho \mathfrak{g}
\end{equation}
with the boundary condition $\pi = \pi_\text{top}$ imposed at $\eta=\eta_\text{top}$ for some constant $\pi_\text{top}$.  Note that if $\mu \equiv 1$ is enforced, then \eqref{eq:nhtheta} becomes a nonstiff hydrostatic model without vertically propagating acoustic waves.  We then say the model is in hydrostatic mode; otherwise it is in nonhydrostatic mode.  In hydrostatic mode \eqref{eq:nhtheta} can be efficiently integrated by an explicit RK method with a KGO or KGNO stability polynomial and the CFL condition is controlled by the stability of the KGO and KGNO polynomials on the imaginary axis.  This CFL condition places an upper bound on the maximum stable step-size for IMEX RK methods integrating \eqref{eq:nhtheta} in nonhydrostatic mode.  

\subsection{Analysis of vertical acoustic wave propagation}\label{sec:vertwav}

HEVI partitioning is commonly employed in nonhydrostatic models \cite{Satoh2002,LWW2013,ARKODE2018,accerlatenuma}.  Our HEVI strategy partitions \eqref{eq:nhtheta} into a stiff term representing vertical acoustic wave propagation and a nonstiff term representing advection and horizontal acoustic wave propagation.  Devising such a partitioning of \eqref{eq:nhtheta} requires understanding the structure of vertically propagating waves.  In this section, we analyze this structure in our Laprise-like formulation. 

Due to the mass-based vertical coordinate, oscillations in density will cause oscillations in $\phi$ \cite[Appendix A]{Laprise1992}.  In particular, density oscillations from vertical acoustic waves manifest in the physical position of the model $\eta$-layers and are decoupled from vertical motions relative to this moving coordinate system.  Therefore, the vertical advection terms (e.g. $\dot{\eta}\pd{w}{\eta}{}$) are not associated with the fast motions of the vertical acoustic waves.  This isolates the vertical acoustic waves to the two non-transport terms in the equations for $w$ and $\phi$ in \eqref{eq:nhtheta}.  To see this, consider
\begin{equation}\label{eq:vertwav}
w_t - \mathfrak{g} (1-\mu) = 0, \quad \phi_t - \mathfrak{g} w = 0, \quad
\pd{\phi}{\eta}{}= -R \Theta p^{\kappa - 1}, \quad \rho = - \pd{\pi}{\eta}{}/\pd{\phi}{\eta}{},
\end{equation}
where for simplicity we ignore moisture and drop the constant reference pressure in the definition of Exner pressure: $\Pi=p^{\kappa}$ rather than $\Pi = (p/p_0)^{\kappa}$ for some reference constant $p_0$.  The fourth equation $\rho = -\pd{\pi}{\eta}{} \left(\pd{\phi}{\eta}{} \right)^{-1}$ follows from the definition of $\pi$ and $\mu := \pd{p}{\eta}{}/\pd{\pi}{\eta}{}$ as in \eqref{eq:nhtheta}. Following \cite{Thuburn2012}, we linearize  \eqref{eq:vertwav} around the constant state $\phi_{ref}$, $w_{ref}$, $p_{ref}$,... with perturbations given by $\tilde{\phi}$, $\tilde{w}$,.. and  $\tilde{\Theta}=\tilde{\pi}=0$ to obtain:
$$\tilde{\phi}_{tt} - c^2\dfrac{\partial^2\tilde{\phi}}{\partial\eta^2}=0
\qquad
c^2 =\dfrac{RT_{ref}}{(\kappa-1)}\left(\pd{\eta}{z}{}\right)^2$$
where $c$ is the sound speed in $\eta$ coordinates.  With this in mind, we choose our HEVI partitioning such that $\mathfrak{g}(1-\mu)$ and $\mathfrak{g}w$ are the only implicitly treated terms of \eqref{eq:nhtheta}.

\subsection{HEVI partitioning and simplification of IMEX RK stage equations}\label{sec:imexsplitting}

We express \eqref{eq:nhtheta} as a general evolution equation
\begin{equation}\label{eq:evolutioneqn}
  \xi_t = f(\xi), \quad \xi = (u,v,w,\phi,\Theta,
  \partial \pi / \partial \eta)^T.
\end{equation}
We define the HEVI additive partitioning of \eqref{eq:nhtheta} with $f(\xi) = n(\xi)+s(\xi)$ as follows:
\begin{equation}\label{eq:nhtheta_IMEXsplitting}
s(\xi) := (0 ,0, -\mathfrak{g}(1-\mu), \mathfrak{g} w ,0 ,0)^T, \quad n(\xi) := f(\xi)-s(\xi).
\end{equation}
Consider the solution of the IVP $\xi_t = n(\xi)+s(\xi)$, $\xi(t_0) = \xi_0$ by an IMEX RK method of the form \eqref{eq:dirkIMEX} with step-size $\Delta t > 0$.  For $j=1,\hdots,r$ and $m\in \{0\} \cup \mathbb{N}$, we express the internal stages as $g_{m,j} = (g_{m,j}^u,g_{m,j}^v,g_{m,j}^w,g_{m,j}^{\phi},g_{m,j}^{\Theta},g_{m,j}^{\partial \pi})^T$ where $\partial \pi := \partial \pi / \partial \eta$.  Using the notation of \eqref{eq:dirkIMEX} we write
$$g_{m,j} = E_{m,j} + \Delta t \hat{A}_{j,j} s(g_{m,j}), \quad j=1,\hdots,r.$$
From the definition of $n$ and $s$, the internal stages for $u$, $v$, $\Theta$, and $\partial \pi / \partial \eta$ are explicit:
$$g_{m,j}^u = E_{m,j}^u, \quad g_{m,j}^v = E_{m,j}^v, \quad g_{m,j}^{\Theta} = E_{m,j}^{\Theta}, \quad g_{m,j}^{\text{dp}} = E_{m,j}^{\partial \pi}.$$
On the other hand, determining $g_{m,j}^w$ and $g_{m,j}^{\phi}$ requires solving the following system:
\begin{equation}\label{eq:IMEXstageeqns}
\left\{
\begin{array}{lcr}
g_{m,j}^w = E_{m,j}^w + \Delta t \mathfrak{g} \hat{A}_{j,j}(1-\mu_{m,j}) \\
g_{m,j}^{\phi} = E_{m,j}^{\phi} + \Delta t \mathfrak{g} \hat{A}_{j,j}g_{m,j}^w \\
\end{array}\right., \quad m \in \{0\} \cup \mathbb{N}, \quad j =1,\hdots,r,
\end{equation}
where $\mu_{m,j} := \mu(g_{m,j}^w,g_{m,j}^{\phi})$  (recall that $\mu := \pd{p}{\eta}{}/\pd{\pi}{\eta}{}$).  The second equation in \eqref{eq:IMEXstageeqns} is rearranged to 
\begin{equation}\label{eq:wphirelation}
g_{m,j}^w = (g_{m,j}^{\phi} - E_{m,j}^{\phi})/(\mathfrak{g}\Delta t \hat{A}_{j,j}).
\end{equation}
It follows that $g_{m,j}^w$ is an explicit function of $g_{m,j}^{\phi}$ and $\mu_{m,j} = \mu(g_{m,j}^{\phi})$.  Substituting \eqref{eq:wphirelation} into the first equation of \eqref{eq:IMEXstageeqns} implies that $g_{m,j}^{\phi}$ is given by
$$g_{m,j}^{\phi} - E_m^{\phi} = g \Delta t \hat{A}_{j,j}E_m^w - (\mathfrak{g}\Delta t \hat{A}_{j,j})^2 (1-\mu_{m,j}), \quad m \in \mathbb{N} \cup \{0\}, \quad j =1,\hdots,r.$$ 
Hence we can find $g_{m,j}^{\phi}$ by solving $G_{m,j}(g_{m,j}^{\phi})  = 0$ where
\begin{equation}\label{eq:internalphieqn}
G_{m,j}(g_{m,j}^{\phi}) = g_{m,j}^{\phi} - E_{m,j}^{\phi} - \mathfrak{g} \Delta t \hat{A}_{j,j}E_m^w + (\mathfrak{g}\Delta t \hat{A}_{j,j})^2 (1-\mu_{m,j}).
\end{equation}
We solve Equation \eqref{eq:internalphieqn} with Newton's method (described in Section \ref{sec:solver}).


\section{Implementation and experiments }\label{sec:experiments}

\subsection{Spatial discretization and implementation details}\label{sec:impdetails}

HOMME-NH is implemented in the High Order Method Modeling Environment (HOMME) \cite{camse2012,cam42013}.  Horizontal derivatives (those involving $\nabla_{\eta}$) are discretized with fourth order spectral elements \cite{FournierTaylor2010} on the cubed sphere grid \cite[Sec. 4]{ITT1997}.  Compatibility (see \cite{FournierTaylor2010}) of the spectral element method implies discrete conservation of mass, energy, and potential vorticity by the $\nabla_{\eta}$ operator in continuous time.  Vertical derivatives (those involving $\partial / \partial \eta$) are discretized with the second order SB81 Simmons and Burridge \cite{SimmonsBurridge1981} method with a Lorenz vertical staggering \cite{Lorenz1960}.   Compatibility of the spectral element and SB81 methods and careful treatment of the thermodynamic variables yields mass and energy conservation of the spatial discretization in continuous time (see \cite{hommenh} for details).


Spectral element discretizations can generate unstable oscillations  \cite{ullrichetal2018}.  These oscillations are dampened for long simulations using hyper-viscosity with respect to a hydrostatic background state in an operator split manner.  Starting with $\xi_m$, defined as the approximation to $\xi(t)$ (Section \ref{sec:imexsplitting}) at time $t_m$, an approximation to $\xi(t)$ at time $t_{m+1}$ is first formed by advancing a single time-step of the full space-time discretization of \eqref{eq:nhtheta}.  Hyper-viscosity is then applied to the result, denoted $\tilde{\xi}_{m+1}$, to obtain $\xi_{m+1}$:
$$\xi_{m+1} := \tilde{\xi}_{m+1} + \nu \Delta t \Delta^2_{\eta}( \tilde{\xi}_{m+1}-\xi_{m+1}^{\pi}), \quad \nu > 0,$$
where $\nu$ is determined by the grid scale of the horizontal spatial resolution and $\xi_{m+1}^{\pi}$ is a hydrostatic background state (see \cite{hommenh} for a more detailed description).  This operator splitting limits temporal accuracy to first order (unless the spatial and temporal resolutions are reduced simultaneously). Therefore, hyper-viscosity is not applied in our formal convergence study (Section \ref{sec:accuracy}). 

IMEX RK methods are implemented with an interface to the ARKode package \cite{ARKODE2018,Gardner2017} of the SUNDIALS library \cite{hindmarsh2005sundials}.  That interface, based off one for the nonhydrostatic Tempest dynamical core \cite{guerraullrich2016}, was developed in \cite{voglpaper2019} for rapid testing and implementation of IMEX RK methods, along with evaluation methodologies for accuracy, conservation, and efficiency.  We compare our IMKG2-3 methods with several IMEX RK methods from the literature (henceforth called the non-IMKG methods) as well as the five stage, third order accurate KGU35 explicit RK method \cite[Eq. 56]{guerraullrich2016}.  The non-IMKG methods we consider are ARS232, ARS343, and ARS443 \cite[Sec. 2.5,2.7,2.8]{ARS1997};  ARK324 and ARK346 \cite[p. 47-48]{CK2003}; and ARK2 \cite[Eq. 3.9]{GKC2013}.

\subsection{Solver implementation}\label{sec:solver}

We now describe the computation of the implicit stages $g_{m,j}$ from Section \ref{sec:imexsplitting} via Newton's method.  From the initial guess $g_{m,j}^{(0)} = E_{m,j}$, the ARKode package generates iterates $g_{m,j}^{(k+1)}$ of the form $g_{m,j}^{(k+1)} = g_{m,j}^{(k)} + \delta_{m,j}^{(k+1)}$, where $ \delta_{m,j}^{(k+1)}$ is the solution of
\begin{equation*}
  \big[I - \Delta t \hat{A}_{j,j}\partial_q s(g_{m,j}^{(k)})\big] \delta_{m,j}^{(k+1)} = E_{m,j}, \quad \partial_q s := \partial s/\partial q.  \\
\end{equation*}
Recall from Section \ref{sec:imexsplitting} that the only non-zero elements of $\partial_q s(g_{m,j}^{(k)})$ are those such that both the row and column pertain to $g_{m,j}^w$ or $g_{m,j}^\phi$.  To take advantage of this structure, the ARKode package calls a custom HOMME-NH routine to solve for $\delta_{m,j}^{(k+1)}$ from $E_{m,j}$, $\Delta t$, $\hat{A}_{j,j}$, and $g_{m,j}^{(k)}$.  In this custom routine, components of $\delta_{m,j}^{(k+1)}$ not pertaining to $g_{m,j}^w$ or $g_{m,j}^\phi$ are set to the values of the corresponding components of $E_{m,j}$.  Components of $\delta_{m,j}^{(k+1)}$ pertaining to $g_{m,j}^\phi$, denoted $\delta_{m,j}^{\phi,(k+1)}$, are computed by decomposing the linear system $J_{m,j}(g_{m,j}^{(k)}) \delta_{m,j}^{\phi,(k+1)} = E_{m,j}^\phi$ into the independent tridiagonal blocks for each grid column.  The LAPACK routines DGTTRF and DGTTRS are called to solve for $\delta_{m,j}^{\phi,(k+1)}$, which is then used to complete $\delta_{m,j}^{(k+1)}$ via \eqref{eq:wphirelation}: $\delta_{m,j}^{(k+1)w} = (\delta_{m,j}^{(k+1)\phi} - E_{m,j}^\phi)/(\mathfrak{g}\Delta t \hat{A}_{j,j})$.

The ARKode package generates iterates $\delta_{m,j}^{(k+1)}$ until $R_{m,j}^{(k+1)}\|\delta_{m,j}^{(k+1)}\| < \epsilon$, where
\begin{equation*}
  R_{m,j}^{(k+1)} = \max \left ( 0.3 R_{m,j}^{(k)}, \frac{\|\delta_{m,j}^{(k+1)}\|}{\|\delta_{m,j}^{(k)}\|} \right ), \,
  \|\delta_{m,j}^{(\cdot)}\| = \left [ \frac{1}{N} \sum_{l=1}^N \left( \frac{[\delta_{m,j}^{(\cdot)}]_l}{\epsilon_r |[x_{m,j}]_l| + [\epsilon_a]_l} \right)^2 \right]^{\frac{1}{2}},
\end{equation*}
$R_{m,i}^{(0)} = 1$, $N$ is the total number of components in $q_m$, and $[\cdot]_l$ indicates selecting the $l^\text{th}$ element.  Note that $\epsilon$, $\epsilon_r$, and $\epsilon_a$ are all tunable tolerances.  The value of $\epsilon$ chosen here is the default ARKode value $\epsilon = 0.1$.  We chose $\epsilon_r = 10^{-6}$ by varying the value until the change in solution was negligible.  For the absolute tolerances, we chose $\epsilon_a^u = \epsilon_a^v = \epsilon_a^w = 10 \epsilon_r$, $\epsilon_a^\phi = 10^5\epsilon_r$, $\epsilon_a^\Theta = 10^6\epsilon_r$, and $\epsilon_a^{\partial \pi / \partial \eta} = \epsilon_r$.  Those coefficients correspond to the general expected magnitude of each of the quantities.

\subsection{DCMIP Test Cases and small planets}\label{sec:dcmip}

We use two test cases from the 2012 Dynamical Core Model Intercomparison Project (DCMIP2012) \cite{dcmip2012}: the nonhydrostatic gravity wave test case (DCMIP2012.3.1) and the dry baroclinic instability test case (DCMIP2012.4.1).  These test cases make use of planets whose radiuses can vary while atmospheric depth and gravity are held constant.  For example, small planet $\times 100$ is a planet whose radius is 1/100 that of the Earth's while its atmospheric depth and gravity are the same as the Earth's.  The amount by which the planet is scaled is referred to as the planet size.  The DCMIP2012.3.1 test case is run with small planet $\times 125$ and the DCMIP2012.4.1 test case is run with planet sizes $1$, $10$, and $100$. 

Small planets enable simulation of various vertical-to-horizontal aspect ratios without computationally expensive experiments at high horizontal resolution.  Determining the maximum usable step-size of many methods at various aspect ratios can then be done in a reasonable amount of time.  A small planet $\times n$ simulation is comparable in terms of stability to a normal size planet simulation where the vertical-to-horizontal aspect ratio is scaled by $n$.  For explicit methods running HOMME-NH in hydrostatic mode (KGU35(H) in Table \ref{tab:maxusablestep}), the maximum usable step-size scales as the planet size.  This is not necessarily true for IMEX RK methods (see Table \ref{tab:maxusablestep}).

\subsection{Test Results}\label{sec:testresults}

\subsubsection{Accuracy}\label{sec:accuracy}

We present the results of a formal convergence study of the best performing (in terms of the results of Section \ref{sec:imkgstabilityresults}) IMKG2-3 methods.  These methods are used for integration of the DCMIP2012.3.1 nonhydrostatic gravity wave test case with small planet $\times 125$ and $ne=27$ cubed sphere resolution with 20 vertical levels.  We generate an approximate reference solution over a 5 hour window using the explicit KGU35 method with the (very small) step-size of $\Delta t = 3.9 \cdot 10^{-4}$.

As discussed in Section \ref{sec:impdetails}, we run without hyperviscosity to avoid a reduction to first order temporal accuracy.  Without artificial damping of the high frequency modes, large and unphysical oscillations generated by the spectral element discretization can destabilize longer simulations.  Thus, we restrict the simulations to 5 hours when running without hyperviscosity.  The results in Figure \ref{fig:convergence} show that the IMKG2-3 methods we test attain their formal convergence order until the error reaches that of machine round-off accumulation.

\begin{figure}
\centering
\includegraphics[scale=0.25]{./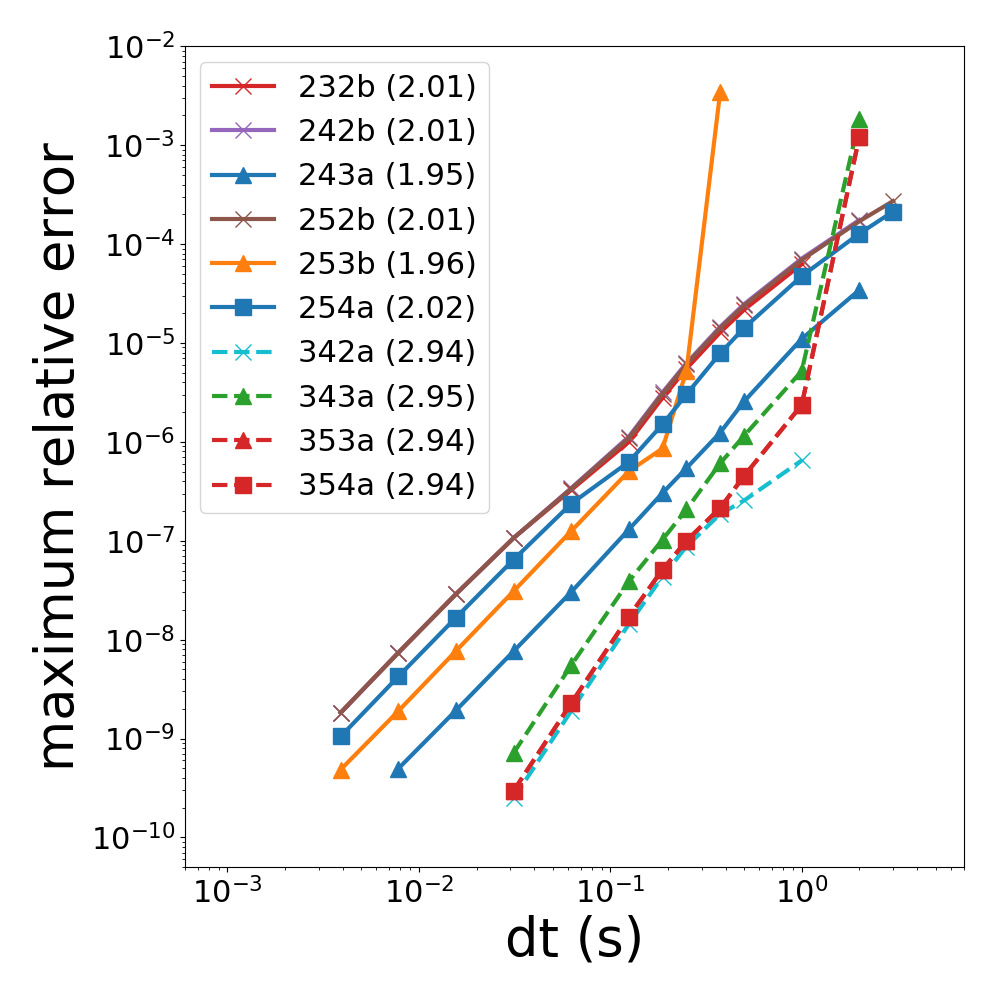}
\caption{Plot of the maximum relative error in the temperature field vs step-size of various IMKG2-3 methods for the DCMIP2012.3.1 test after a 5 hour run.  The value in parentheses next to the method name represents the best approximation to the order of convergence.}
\label{fig:convergence}
\end{figure}

\subsubsection{Stability and efficiency at the maximum usable step-size}\label{sec:imkgstabilityresults}

We present results for the maximum usable step-size (MUS) of various IMKG2-3 and non-IMKG methods (Tables \ref{tab:maxusablestep}-\ref{tab:score}).  We also present results on the total time to solution or run-time when these methods are run at their MUS (Figures \ref{fig:runtime1}-\ref{fig:runtime2}).  All runs use the DCMIP2012.4.1 test case on the $ne=30$ cubed sphere grid with 30 vertical levels and planet sizes $1$, $10$, and $100$.  The DCMIP2012.4.1 test case is employed since this test case generates the types of nontrivial flow expected in production runs.

Although running methods at or near their MUS risks producing an inaccurate solution, this is common practice for global atmosphere models as noted in \cite{temporalerror}.  The results we present are still useful since they can give estimates of the computational cost per time-step, the minimal time to solution, and the computational scaling of various IMEX RK methods relative to each other.  See \cite{voglpaper2019} for evaluation of the accuracy of IMKG methods run with large time-steps in HOMME-NH and \cite{hommexx} for analysis of its computational scaling.


The MUS is determined empirically: for simulation of planet size $n$, we attempt runs with a time-step of $25k/n$ starting from $k=1$, then $k=2$, and so forth.  If $k_0$ is the minimal $k$ such that a run with time-step $25k/n$ fails to complete due to solution blow-up or a solver failure, then we call $25(k_0-1)/n$ the MUS.  Run-time experiments (Figures \ref{fig:runtime1}-\ref{fig:runtime2}) were conducted on a local computing cluster using 5 or 150 dual socket nodes.  Each socket contains 18, 2.1 GHz, Intel Broadwell E5-2695 v4 computing cores.  A cubed sphere with the $ne = 30$ horizontal resolution has $5400$ horizontal elements, with one vertical column per element, so that 5 and 150 node runs correspond to ratios of 30 horizontal elements per computing core (elements/core) and 1 element/core, respectively.  These runs represent extremes of computational scaling from the 30 element/core regime where parallel communication is cheap to the 1 element/core regime where parallel communication is expensive.

We consider methods to be efficient if they have a large MUS relative to the number of explicit function evaluations and implicit solves they require and at a variety of vertical-to-horizontal aspect ratios.  From run-time data, we observed that the cost of an implicit solve relative to an explicit function evaluation varied between about 55-60\% for the 30 element/core runs down to about 35-40\% for the 1 element/core runs.  In Table \ref{tab:score} we scale the MUS of various methods (Table \ref{tab:maxusablestep}) by their number of explicit function evaluations and implicit stages assuming that an implicit solve costs 50\% of an explicit function evaluation. While this metric assumes that the cost of an implicit solve does not vary between methods, machines, or implementations, it still gives a coarse measure of run-time performance (see Figures \ref{fig:runtime1}-\ref{fig:runtime2}).


We now discuss run-time performance for IMKG2-3 methods and the non-IMKG methods (Figures \ref{fig:runtime1}-\ref{fig:runtime2}).  The run-time of a method depends on the planet-size as well as the number of elements/core used in the simulation.  For several methods (e.g. IMGK253b and ARK346), the run-times are much longer for small planet $\times 100$ compared to small planet $\times 1$, reflecting the results of Table \ref{tab:maxusablestep}.   This means that for a fixed resolution and fixed number of elements/core, the relative performance of two methods can change as the planet-size is decreased (e.g. the IMKG252b and IMKG253b methods with either 1 or 30 elements/core).  Similarly, the relative performance of methods can change as the number of elements/core decreases (e.g. ARK2 and ARS232 for small planet $\times 100$ or IMKG232b and IMKG343a for small planet $\times 10$).

At 30 elements/core, the only non-IMKG methods whose run-time performance is competitive with any IMKG2-3 methods are ARK324 for small planet $\times 1$ and the ARK2 method for small planet $\times 100$.  The IMKG242b and IMKG252a methods outperform every non-IMKG method for every planet size and the remaining tested IMKG2-3 methods are outperformed only in some cases by the ARK324 method for small planet $\times 1$ and the ARK2 method for small planet $\times 1$.  

At 1 element/core,  IMKG252b has better run-time performance than any of the non-IMKG methods at every planet-size.  IMKG242b outperforms every non-IMKG method except ARS343 for small planet $\times 1$.  ARK324 and ARS343 perform well for small planet $\times 1$ while ARK2 and ARS232 perform well for small planet $\times 100$; however, many of the IMKG2-3 methods (IMKG242b, IMKG252b, IMKG243a, IMKG253b, IMKG254a-b, IMKG353a, IMKG354a) complete 15 day runs in less than 46 seconds for every tested planet size.   This run-time performance is not matched for any of the tested non-IMKG methods.  Several IMKG2-3 methods perform well for every tested planet size, whereas this is not true of the non-IMKG methods.  This reflects the derivation of the tested IMKG2-3 methods that relied on analysis of their H-stability regions (Section \ref{sec:imkgstability}).

Several methods are capable of running in nonhydrostatic mode at or near the hydrostatic step-size; their predicted step-size when run in hydrostatic mode.  For example, the MUS of the IMKG254a method in nonhydrostatic mode is equal to or just less than the MUS of the KGU35 explicit RK method in hydrostasic mode for planet sizes $\times 1$, $\times 10$, and $\times 100$.  Since KGU35 and IMKG254a are five stage methods, KGU35 has a KGNO stability polynomial, and the explicit method of IMKG254a has a KGO stability polynomial, it follows that the MUS of IMKG254a is at or near the hydrostatic step-size for planet sizes $\times 1$, $\times 10$, and $\times 100$.  When adjusting for the fact that IMKG232b and IMKG243a have three and four explicit stages respectively, it follows that the MUS of these two methods is also at or near their expected hydrostatic step-size for planet sizes $\times 1$, $\times 10$, and $\times 100$.  None of the non-IMKG methods tested are capable of running at or near a hydrostatic step-size for all tested planet sizes.

\begin{table}[htb]
    \label{tab:maxusablestep}
\small
\centering
\tabcolsep=.11cm
    \begin{tabular}{ l|llllllllllllllll}

IMKG    & 232a & 232b & 242a & 242b & 243a & 252a & 252b & 253a  \\ \hline
 x1     & 100  & 200  & 175  & 225  & 275  & 150  & 275  & 200   \\
 x10    & 10   & 17.5 & 17.5 & 27.5 & 27.5 & 15   & 37.5 & 20  \\ 
 x100   & 1.75 & 1.75 & 2.25 & 2.5  & 2.5  & 2.5  & 3.5  & 2.25  \\ \hline

     \end{tabular}
     
         \begin{tabular}{ l|lllllllllllllllll}

IMKG  & 253b & 254a & 254b & 254c  & 342a & 343a & 353a & 354a \\ \hline
 x1   & 375  & 375  & 375  & 150   & 75 & 275 & 250 & 350 \\
 x10  & 32.5 & 37.5 & 35   & 15    & 22.5 & 22.5 & 25 & 32.5 \\ 
 x10  & 2.5  & 3.5  & 3.0  & 2.25  & 2.25 & 2.25 & 2.5 & 2.75\\ \hline

     \end{tabular}

  \tabcolsep=0.11cm
    \begin{tabular}{ l|lllllllllllll}   
     Method & KGU35 & KGU35(H) & ARS232 & ARS343 & ARS443 & ARK2 & ARK324 & ARK346 \\ \hline
 x1    & 0.75 & 375   &  125   & 275    & 175  & 125 & 250 & 275 \\ 
x10    & 0.75 & 37.5  & 12.5   & 17.5   & 15   & 12.5   & 17.5 & 20 \\
x100   & 0.75 & 3.75  & 1.75 & 1.5 & 1.75 & 1.75 & 1.75 & 1.5 \\

          \end{tabular}
    \caption{MUS for various IMEX RK methods and the KGU35 method running in both in nonhydrostatic and hydrostatic (KGU35(H)) modes  with $ne = 30$, 30 vertical levels, and small planet $\times 1$, $\times 10$, or $\times 100$.   }

\end{table}

\begin{table}[htb]
    \label{tab:score}
\small
\centering
\tabcolsep=.11cm
    \begin{tabular}{ l|lllllllllllllllll}

IMKG    & 232a & 232b & 242a & 242b & 243a & 252a & 252b & 253a  \\ \hline
 x1     & 25   & 50   & 35  & 45  & 50   & 25    & 45.8 & 30.8   \\
 x10    & 2.5  & 4.38 & 3.5 & 5.5 & 5    & 2.5   & 6.25 & 3.08  \\ 
 x100   & .438 & .438 & .45 & .50 & .455 & .416  & .583 & .346  \\ \hline

     \end{tabular}
     
         \begin{tabular}{ l|llllllllllllllllll}

IMKG  & 253b  & 254a & 254b  & 254c & 342a & 343a & 353a  & 354a \\ \hline
 x1   & 57.7  & 53.6 & 53.6  & 21.4 & 15   & 50   & 38.5  & 50 \\
 x10  & 5     & 5.36 & 5     & 2.14 & 4.5  & 4.09 & 3.85  & 4.64 \\ 
 x10  & .385  & 0.5  & .429  & .321 & .45  & .409 & .3853 & .393 \\ \hline

     \end{tabular}

  \tabcolsep=0.11cm
    \begin{tabular}{ l|lllllllllllll}   
     Method & KGU35 & KGU35(H) & ARS232 & ARS343 & ARS443 & ARK2 & ARK324 & ARK346 \\ \hline
 x1    & 0.15 & 75   &  41.7 & 61.1 & 29.2 & 41.7 & 55.6 & 36.7 \\ 
x10    & 0.15 & 7.5  & 4.17  & 3.89 &  2.5 & 4.17 & 3.89 & 2.67 \\
x100   & 0.15 & .75  & .583  & .389 & .292 & .583 & .389 & .2 \\

          \end{tabular}
    \caption{MUS from Table \ref{tab:maxusablestep} scaled by the number of required explicit function evaluations and implicit stages assuming that an implicit solve costs 50\% of an explicit function evaluation. }

\end{table}

\begin{figure}[htb]
\includegraphics[scale=0.4]{./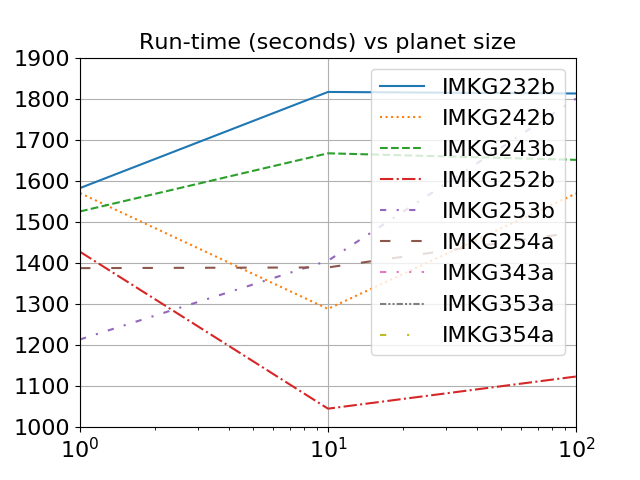}\includegraphics[scale=0.4]{./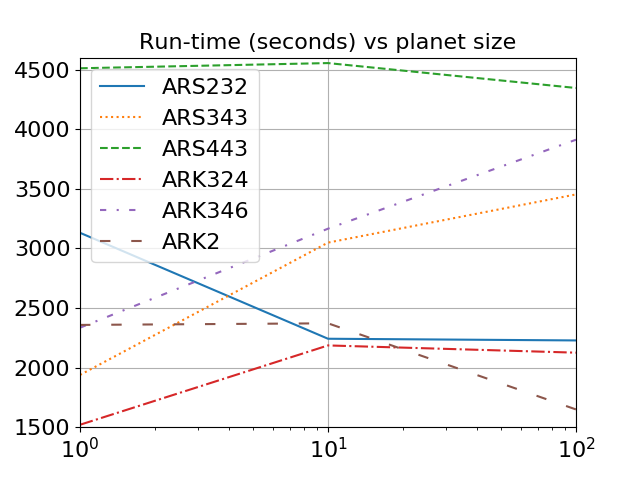}
\caption{Run-time vs planet size of several IMKG2-3 and non-IMKG methods running at their maximum usable time-step with 30 elements/core, $ne=30$, and 30 vertical levels. }
\label{fig:runtime1}
\end{figure}

\begin{figure}[htb]
\includegraphics[scale=0.4]{./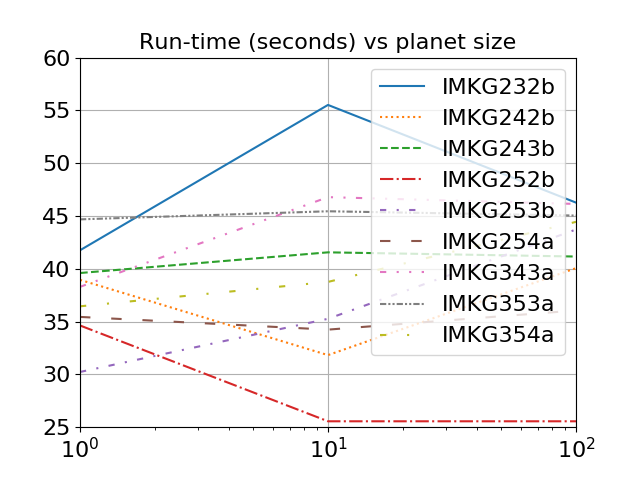}\includegraphics[scale=0.4]{./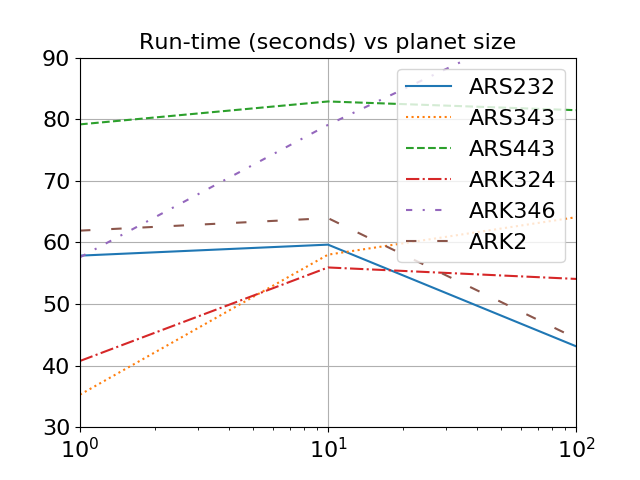}
\caption{Run-time vs planet size of several IMKG2-3 and non-IMKG methods running at their maximum usable time-step with 1 element/core, $ne=30$, and 30 vertical levels. }
\label{fig:runtime2}
\end{figure}


\section{Conclusion and Acknowledgements}\label{sec:conclusions}

In this paper we have analyzed a new family of second and third order accurate IMEX Runge-Kutta methods for nonhydrostatic atmosphere models.  H-stability is used to derive methods capable of running with large stable step-sizes, including several methods capable of running with a hydrostatic step-size, to integrate a nonhydrostatic atmosphere model with a HEVI partitioning.  The analysis presented in this paper can be readily modified to derive second and third order accurate IMEX RK methods for applications with different stability requirements.

We acknowledge David Gardner, Professor Dan Reynolds, and Carol Woodward from the SUNDIALS-ARKode development team for their help in developing and implementing the ARKode-HOMME-NH interface and their advice regarding IMEX methods and solvers.  We also thank Professor Paul Ullrich for his expertise and advice on structuring the paper that led to an improved manuscript.



\section{Appendix}\label{sec:appendix}

 Each IMKG2-3 method is named IMKG$p f j l$ where $p$ is the order of accuracy, $f$ is the number of explicit stages, $j$ is the number of implicit stages, and $l$ is an identifying letter.  We express IMKG2-3 methods using five vectors (Section \ref{sec:methods}): $\alpha$, $\beta$, $\hat\alpha$, $\hat\beta$, and $\hat\delta$  (we omit $\beta$ for IMKG2 methods since it is always zero for them).  Method coefficients for IMKG2 and IMKG3 methods are given in Tables \ref{tab:imkg2methods} and \ref{tab:imkg3methods} respectively.  Various properties of these methods are displayed in Table \ref{tab:imkg_properties}.

\begin{table}[htb]
\footnotesize
\centering
\begin{tabular}{ c|ccc }

  & & &\\[-1em]
 IMKG  & $\alpha$ & $\hat{\alpha}$ & $\hat{\delta}$ \\ \hline
   & & &\\[-1em]
232a & $\left(\frac{1}{2},\frac{1}{2},1\right)^T$  & $\left(0,0,\frac{\sqrt{2}-1}{2}\right)^T$ & $\left(\frac{2-\sqrt{2}}{2},\frac{2-\sqrt{2}}{2}\right)^T$  \\[.4em]
 & & &\\[-1em]
232b & $\left(\frac{1}{2},\frac{1}{2},1\right)^T$ &\ $\left(0,0,-\frac{1+\sqrt{2}}{2} \right)^T$ & $\left(0, \frac{2+\sqrt{2}}{2},\frac{2+\sqrt{2}}{2}\right)^T$ \\[.4em]
 & & &\\[-1em]
242a & $\left(\frac{1}{4},\frac{1}{3},\frac{1}{2},1 \right)^T$ & $\left(0,0,\frac{\sqrt{2}-1}{2},1\right)^T$ & $\left(0,0,\frac{2-\sqrt{2}}{2},\frac{2-\sqrt{2}}{2} \right)^T$\\[.4em]
 & & &\\[-1em]
242b & $\left(\frac{1}{4},\frac{1}{3},\frac{1}{2},1 \right)^T$ & $\left(0,0,-\frac{1+\sqrt{2}}{2},1 \right)^T$ & $\left(0,0,\frac{2+\sqrt{2}}{2},\frac{2+\sqrt{2}}{2} \right)^T$\\[.4em]
 & & &\\[-1em]
243a & $\left(\frac{1}{4},\frac{1}{3},\frac{1}{2},1 \right)^T$ & $\left(0,\frac{1}{6},\frac{\sqrt{3}}{6},1 \right)^T$ & $\left(0,\frac{1}{2}+\frac{\sqrt{3}}{6} ,\frac{1}{2}+\frac{\sqrt{3}}{6} ,\frac{1}{2}+\frac{\sqrt{3}}{6} \right)^T$\\[.4em]
 & & &\\[-1em]
252a & $\left(\frac{1}{4},\frac{1}{6},\frac{3}{8},\frac{1}{2},1 \right)^T$ & $\left(0,0,\frac{\sqrt{2}-1}{2},1\right)^T$ &   $\left(0,0,0,\frac{2-\sqrt{2}}{2},\frac{2\sqrt{2}}{2} \right)^T$\\[.4em]
 & & &\\[-1em]
 252b & $\left(\frac{1}{4},\frac{1}{6},\frac{3}{8},\frac{1}{2},1 \right)^T$ & $\left(0,0,-\frac{1+\sqrt{2}}{2},1\right)^T$ &   $\left(0,0,0,\frac{2+\sqrt{2}}{2},\frac{2+\sqrt{2}}{2} \right)^T$\\[.4em]
 & & &\\[-1em]
253a & $\left(\frac{1}{4},\frac{1}{6},\frac{3}{8},\frac{1}{2},1 \right)^T$ & $\left(0,\gamma_-,\frac{\sqrt{3}}{6},1\right)^T$ & $\left(0, \frac{1}{2}-\frac{\sqrt{3}}{6}, \frac{1}{2}-\frac{\sqrt{3}}{6}, \frac{1}{2}-\frac{\sqrt{3}}{6} \right)^T$\\[.4em]
 & & &\\[-1em]
 253b & $\left(\frac{1}{4},\frac{1}{6},\frac{3}{8},\frac{1}{2},1 \right)^T$ & $\left(0,\gamma_+,-\frac{\sqrt{3}}{6},1\right)^T$ & $\left(0, \frac{1}{2}+\frac{\sqrt{3}}{6}, \frac{1}{2}+\frac{\sqrt{3}}{6}, \frac{1}{2}+\frac{\sqrt{3}}{6} \right)^T$\\[.4em]
 & & &\\[-1em]
 & & &\\[-1em]
254a  & $\left(\frac{1}{4},\frac{1}{6},\frac{3}{8},\frac{1}{2},1 \right)^T$ & $\left(0,-\frac{3}{10} ,\frac{5}{6},-\frac{3}{2}\right)^T$ & $\left(-\frac{1}{2},1,1,2 \right)^T$\\[.4em]
 & & &\\[-1em]
 254b & $\left(\frac{1}{4},\frac{1}{6},\frac{3}{8},\frac{1}{2},1 \right)^T$ & $\left(0,-\frac{1}{20} ,\frac{5}{4},-\frac{1}{2}\right)^T$ & $\left(-\frac{1}{2},1,1,1 \right)^T$\\[.4em]
 & & &\\[-1em]
 
254c & $\left(\frac{1}{4},\frac{1}{6},\frac{3}{8},\frac{1}{2},1 \right)^T$ & $\left(0,\frac{1}{20},\frac{5}{36},\frac{1}{3},1\right)^T$ & $\left(\frac{1}{6},\frac{1}{6},\frac{1}{6},\frac{1}{6}\right)^T$      \\[.4em]
\end{tabular}
\caption{Method coefficients of IMKG2 methods where $\gamma- = 0.08931639747704086$ and $\gamma_+ = 1.2440169358562922$.}
\label{tab:imkg2methods}

\footnotesize
\centering
\begin{tabular}{ c|cccc } 

  & & \\[-1em]
 IMKG  & $\alpha$ & $\hat{\alpha}$  \\ \hline
 & & \\[-1em]
 342a & $\left(\frac{1}{3}, \frac{1}{3},\frac{3}{4}\right)^T$ & $\left(0,-\frac{1+\sqrt{3}}{6},-\frac{1+\sqrt{3}}{6},\frac{3}{4}\right)^T$  \\[.4em]
 & & \\[-1em]
343a & $\left(\frac{1}{4},\frac{2}{3},\frac{1}{3},\frac{3}{4} \right)^T$ & $\left(0,-\frac{1}{3},-\frac{2}{3},\frac{3}{4}\right)^T$  \\[.4em]
 & & \\[-1em]
353a & $\left(\frac{1}{4},\frac{2}{3},\frac{1}{3},\frac{3}{4} \right)^T$ & $\left( 0,-\frac{359}{600},-\frac{559}{600},\frac{3}{4}\right)^T$ \\[.4em]
 & & \\[-1em]
354a & $\left(\frac{1}{5},\frac{1}{5},\frac{2}{3},\frac{1}{3},\frac{3}{4} \right)^T$ & $\left(0,0,\frac{11}{30},-\frac{2}{3},\frac{3}{4}\right)^T$ \\[.4em]\hline
 & & \\[-1em]
 & $\hat{\delta}$ & $\beta$ \\ 
  & & \\[-1em]
342a &  $\left( 0,\frac{1+\sqrt{3}/3}{2},\frac{1+\sqrt{3}/3}{2}\right)^T$ & $(\frac{1}{3},\frac{1}{3},\frac{1}{4})^T$ \\[.4em]
343a &  $(-\frac{1}{3},1,1)^T$ & $(0,\frac{1}{3},\frac{1}{4})^T$\\[.4em]
 & & \\[-1em]
353a &  $\left(-1.1678009811335388,1.265,1.265\right)^T$ & $(0,0,\frac{1}{3},\frac{1}{4})^T$\\[.4em]
 & & \\[-1em]
354a &  $\left( 0,\frac{2}{5},\frac{2}{5},1\right)^T$ & $(0,0,\frac{1}{3},\frac{1}{4})^T$\\[.4em]
\end{tabular}
\caption{Method coefficients IMKG3 methods.}
\label{tab:imkg3methods}
\end{table}

\begin{table}[htb]
\footnotesize
\centering
\begin{tabular}{ c|ccccccccccccccc }

IMKG & 232a & 232b & 242a & 242b & 243a & 243b & 252a & 252b & 253a & 253b  \\\hline
    & & &\\[-1em]
 I or A & A & A & A & A & A & A & A & A & A & A \\
    & & &\\[-1em]
 VI & Y & Y & N & Y & Y & Y  & N &  N & Y & Y \\
    & & &\\[-1em]
 SD & Y & Y & Y & Y & Y & Y & Y & Y & Y & Y   \\\hline 
 \end{tabular}
 \begin{tabular}{c|ccccccc}
IMKG & 254a & 254b & 254c & 342a & 343a & 353a & 354a \\\hline
    & & &\\[-1em]
 I or A & I & I & A & A & I & A & I \\ 
    & & &\\[-1em]
 VI & Y & Y & Y & N & Y & Y & Y \\ 
    & & &\\[-1em]
 SD & N & N & Y & Y & N & Y & N \\ 
\end{tabular}
\caption{Properties of IMKG2-3 methods used in Section \ref{sec:experiments} with double Butcher tableaux defined in Tables \ref{tab:imkg2methods} and \ref{tab:imkg3methods}: if the implicit method is I- or A-stable (I or A), if the implicit method is VI (YES(Y) or NO(N)), and if the implicit method is SD (YES(Y) or NO(N)).}
\label{tab:imkg_properties}

\end{table}

\clearpage

\bibliographystyle{siamplain}

\end{document}